\def\grad{\nabla}
\def\div{\operatorname{div}}
\def\bar{\overline}
\def\tilde{\widetilde}
\def\F{\mathcal F}
\def\cydot{\leavevmode\raise.4ex\hbox{.}}
\title[Lipschitz stable reconstruction formula]{A Lipschitz stable reconstruction formula for the inverse problem for the wave equation}
\author{Shitao Liu}
\author{Lauri Oksanen}
\address{Department of Mathematics and Statistics, University of Helsinki, P.O. Box 68 FI-00014}
\email{shitao.liu@helsinki.fi}
\email{lauri.oksanen@helsinki.fi}
\subjclass{Primary: 35R30}
\keywords{Inverse problems, wave equation, Lipschitz stability}
\begin{document}

\begin{abstract}
We consider the problem to reconstruct a wave speed $c \in C^\infty(M)$ in a domain $M \subset \R^n$ from 
acoustic boundary measurements modelled by the hyperbolic Dirichlet-to-Neumann map $\Lambda$.
We introduce a reconstruction formula for $c$
that is based on the Boundary Control method and incorporates features also from 
the complex geometric optics solutions approach. 
Moreover, we show that the reconstruction formula is locally Lipschitz stable
for a low frequency component of $c^{-2}$ under the assumption that
the Riemannian manifold $(M, c^{-2} dx^2)$ has a strictly convex function with no critical points.
That is, we show that for all bounded $C^2$ neighborhoods $U$ of $c$,
there is a $C^1$ neighborhood $V$ of $c$ and constants $C, R > 0$ such that 
\begin{align*}
|\F\ll(\tilde c^{-2} - c^{-2}\rr)(\xi)| \le C e^{2R |\xi|} \norm{\tilde \Lambda - \Lambda}_*,
\quad \xi \in \R^n,
\end{align*}
for all $\tilde c \in U \cap V$, where
$\tilde \Lambda$ is the Dirichlet-to-Neumann map corresponding to 
the wave speed $\tilde c$
and $\norm{\cdot}_*$ is a norm capturing certain regularity properties of 
the Dirichlet-to-Neumann maps.
\end{abstract}

\maketitle

\section{Introduction}

Let $M \subset \R^n$ be a compact set with nonempty interior and a smooth boundary $\p M$
and let $c \in C^\infty(M)$ be strictly positive.
We consider the wave equation on $M$,
\begin{align}
\label{eq_wave}
&\p_t^2 u(t,x) - c(x)^2 \Delta u(t, x) = 0, & (t,x) \in (0,\infty) \times M,
\\\nonumber& u(t,x) = f(t,x), & (t,x) \in (0,\infty) \times \p M,
\\\nonumber& u|_{t=0}(x) = 0, \quad \p_t u|_{t=0}(x)=0,  & x \in M.
\end{align}
Let us denote the solution of (\ref{eq_wave}) by $u^f(t,x) = u(t,x)$ and let $T > 0$. We define the operator 
\begin{align*}
\Lambda_{c,T} : f \mapsto \p_\nu u^f|_{(0,T) \times \p M},
\quad f \in C_0^{\infty}((0, T) \times \p M),
\end{align*}
where $\p_\nu$ is the Euclidean normal derivative on $\p M$.
Often we write $\Lambda_T := \Lambda_{c,T}$.
The operator $\Lambda_{T}$ models acoustic boundary measurements and is called the Dirichlet-to-Neumann operator. 
Let us denote by $C_+^\infty(M)$ the set of the strictly positive functions
in $C^\infty(M)$.
Then the inverse problem for the wave equation can be formulated as follows:
\begin{itemize}
\item[(IP)] Reconstruct $c \in C_+^\infty(M)$ given the operator $\Lambda_{c,T}$.
\end{itemize}

The finite speed of propagation for the wave equation 
(\ref{eq_wave}) gives a necessary condition
for $T$ in order to (IP) to have a unique solution. 
Indeed, if there is $x_0 \in M$ such that 
$T < 2 d(x_0, \p M)$,
where $d$ is the distance function of the Riemannian manifold $(M, c^{-2} dx^2)$,
then the measurements $\Lambda_{T}$
can not contain any information about $c(x_0)$.
Conversely, 
the problem (IP) is known to be uniquely solvable for 
$T$ strictly greater than the maximum of $2 d(x, \p M)$ for $x\in M$.
The global uniqueness can be proven either by using the Boundary Control (BC) method 
originated from \cite{Belishev1987} or by using the complex geometric optics (CGO) solutions originated from \cite{Sylvester1987}.
However, a typical application of the BC method depends on 
Tataru's unique continuation theorem \cite{Tataru1995}, whence 
only logarithmic type stability is expected for such an application.
The CGO solutions based approach 
is also typically limited to logarithmic type stability \cite{Mandache2001}.

Here we will introduce a global reconstruction method 
and prove that it is locally Lipschitz stable in the sense that we will describe below. 
The method is a modification of the BC method and employs also the 
harmonic exponential functions of the form
\begin{align*}
e^{i(\xi + i \eta) \cdot x/2},
\quad \xi, \eta \in \R^n, 
\ |\xi| = |\eta|,\ \xi \perp \eta,
\end{align*}
that are CGO solutions for the Euclidean Laplacian.   

H\"{o}lder stability with an exponent strictly better than $1/2$  
allows an inverse problem to be solved locally 
by the nonlinear Landweber iteration \cite{Hoop2011}.
Moreover, the convergence rate of the iteration is linear 
if and only if the problem is Lipschitz stable. 
Hence Lipschitz stability for (IP) would be important even without our explicit reconstruction method. 

H\"{o}lder type stability results for (IP) were first obtained in \cite{Stefanov1998, Stefanov2005},
and the best H\"{o}lder exponent available in the literature is $1/2$, see \cite{Bellassoued2011}. 
However, the H\"{o}lder exponent $1/2$ 
does not allow the convergence result \cite{Hoop2011} to be applied in a straightforward manner. Moreover, the technique in \cite{Bellassoued2011} does not give a global reconstruction method since it employs the geometric optics solutions corresponding to a fixed wave speed $c_0$, and requires $c_0$ to be known a priori. 

The stability result in \cite{Bellassoued2011} depends on the assumption that the Riemannian manifold $(M, c_0^{-2} dx^2)$ is simple. Similarly, our result depends on an assumption that we call {\em stable observability} (see Definition 1 below) and that is also of geometrical nature. 
Let us also point out that Stefanov and Uhlmann \cite{Stefanov2009a} have considered 
the linear inverse problem to recover the initial value of a solution $u$ to the wave equation 
in $\R^n$ given its trace on $\p \Omega \times (0,T)$ where $\Omega \subset \R^n$.
They prove that reconstruction of $u(0)$ supported in $\Omega$
can not be Lipschitz stable if there is a geodesic $\gamma$
such that $\gamma(0) \in \Omega$ and $\gamma([-T, T]) \cap \p \Omega = \emptyset$.
Thus it would be unexpected if the non-linear inverse problem (IP)
was locally Lipschitz stable without additional assumptions on the geometry $(M,c^{-2} dx^2)$.


\subsection{Statement of the main results}

We recall that the wave equation (\ref{eq_wave}) is said to be continuously observable from open $\Gamma \subset \p M$ in time $T > 0$ 
if there is $C_{obs} > 0$ such that
\begin{align}
\label{continuous_observability}
\norm{u(T)}_{H_0^1(M)} + \norm{\p_t u(T)}_{L^2(M)} \le C_{obs} \norm{\p_\nu u}_{L^2((0,T) \times \Gamma)}, 
\end{align}
where $u$ is a solution of the wave equation
\begin{align}
\label{eq_wave_w}
&\p_t^2 u - c^2 \Delta u = 0, &\text{in $(0, T) \times M$},
\\\nonumber &u|_{(0, T) \times \p M} = 0, &\text{in $(0, T) \times \p M$}.
\end{align}
The condition by Bardos, Lebeau and Rauch gives a geometric characterization 
of the continuous observability \cite{Bardos1992, Burq1997}.
In particular, 
if $\Gamma=\partial M$ and $(M, c^{-2}dx^2)$ is non-trapping
then the continuous observability (\ref{continuous_observability}) 
is valid. 
This is analogous with the condition in the above mentioned \cite{Stefanov2009a}.
We refer to \cite{Bardos1992} for the precise formulation of the geometric condition. 

\begin{definition}
Let $U \subset C_+^\infty(M)$. 
We say that the wave equations (\ref{eq_wave}) are {\em stably observable}
for $c \in U$, from open $\Gamma \subset \p M$ in time $T > 0$, 
if there is $C_{obs} > 0$ satisfying the following:
for all $c \in U$ the solutions 
of the wave equation (\ref{eq_wave_w})
satisfy the observability inequality (\ref{continuous_observability}).
\end{definition}
This stronger form of observability does not follow from the technique 
in \cite{Bardos1992} since the compactness-uniqueness argument
there does not bound the constant $C_{obs}$ in terms of the geometry $(M,c^{-2}dx^2)$.  
However, we will prove the following theorem.

\begin{theorem}
\label{thm_stable_for_semisimples}
Let $c \in C_+^\infty(M)$ and 
suppose that there is a strictly convex function $\ell \in C^3(M)$ 
with respect to the metric tensor $c^{-2} dx^2$, and that $\ell$ has no critical points.
Let $U \subset C_+^\infty(M)$ be bounded in $C^2(M)$
and let $\Gamma \subset \p M$ be a neighborhood of 
\begin{align*}
\{ x \in \p M;\ \grad \ell(x) \cdot \nu \ge 0 \}.
\end{align*}
Then there is a neighborhood $V$ of $c$ in $C^1(M)$ and $T > 0$ such that
the wave equations (\ref{eq_wave}) are stably observable
for the wave speeds in the set $U \cap V$, from $\Gamma$ in time $T$.
\end{theorem}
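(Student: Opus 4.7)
\medskip

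\noindent\textbf{Proof plan.}
My plan is to derive the observability inequality \eqref{continuous_observability} by the classical multiplier method, using the convex function $\ell$ to manufacture a geometric multiplier, and then to verify by careful bookkeeping that every constant appearing in the resulting estimate depends on $\tilde c$ only through quantities that are uniformly bounded on $U\cap V$. Set $g := c^{-2}\,dx^2$ and $X := \nabla_g \ell$. Since $M$ is compact, strict $g$-convexity of $\ell$ together with the absence of critical points supplies $\rho,\kappa>0$ with $\nabla^2_g\ell\ge \rho\, g$ and $|X|_g\ge\kappa$ pointwise. The Christoffel symbols of $\tilde g := \tilde c^{-2}\,dx^2$ depend on $\tilde c$ only through $\tilde c$ and $\p\tilde c$; consequently $\nabla^2_{\tilde g}\ell$ and $\tilde X := \nabla_{\tilde g}\ell$ vary continuously with $\tilde c \in C^1(M)$, so one can choose a $C^1$-neighborhood $V$ of $c$ on which $\nabla^2_{\tilde g}\ell\ge (\rho/2)\,\tilde g$ and $|\tilde X|_{\tilde g}\ge \kappa/2$ for every $\tilde c\in V$. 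Moreover $\tilde X\cdot\nu = \tilde c^{2}\,(\nabla\ell\cdot\nu)$ on $\p M$, so the sign structure is independent of $\tilde c$ and the complement of $\Gamma$ is contained in a fixed strict subset $\{\nabla\ell\cdot\nu<-\delta\}$, uniformly in $\tilde c$.

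Next, for $\tilde c\in U\cap V$ and $u$ solving \eqref{eq_wave_w}, I multiply the wave equation by $\tilde X u$ and integrate over $(0,T)\times M$. Integration by parts, the Dirichlet condition $u|_{\p M}=0$, and the usual energy identity produce a Rellich-type relation of the form
\[
\int_0^T\!\!\int_M \nabla^2_{\tilde g}\ell(\nabla_{\tilde g} u,\nabla_{\tilde g} u)\,d\mathrm{vol}_{\tilde g}\,dt + (\text{analogous $\p_t u$ term}) = \tfrac12\!\int_0^T\!\!\int_{\p M}(\tilde X\cdot\nu)|\p_\nu u|^2\,dS\,dt + R,
\]
where $R$ collects time-boundary contributions and commutator remainders. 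On $\p M\setminus\Gamma$ the boundary integrand is non-positive by the sign analysis above and can be dropped; on $\Gamma$ it is bounded by $C\norm{\p_\nu u}_{L^2((0,T)\times\Gamma)}^2$ with $C$ depending only on $\norm{\tilde c}_{C^0}$ and $\norm{\ell}_{C^1}$. Using the convexity lower bound and energy conservation, the left-hand side dominates $\rho' T\, E(u)$ for some $\rho'>0$ independent of $\tilde c$, while $R$ is bounded by $C'E(u)$ independently of $T$. Choosing $T$ larger than an explicit threshold $T_0$ depending on $\rho,\kappa,\norm{\ell}_{C^2}$ and uniform bounds on $\tilde c$, one absorbs $R$ and recovers \eqref{continuous_observability} with a uniform constant $C_{obs}$.

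The main obstacle is the bookkeeping inside $R$: each lower-order contribution must be dominated by $E(u)$ times a constant depending only on $\norm{\tilde c}_{C^2}$ (accounting for the $C^2$-boundedness hypothesis on $U$), on $\norm{\ell}_{C^3}$, and on the $C^1$-distance of $\tilde c$ to $c$. The $C^2$ bound is needed precisely to control commutator terms of the form $[\tilde c^2\Delta,\tilde X]$ and divergences such as $\mathrm{div}_{\tilde g}\tilde X$, whereas the $C^1$ closeness preserves the strict convexity constant and the sign structure on $\p M\setminus\Gamma$. Once this is carried out, uniformity of $T_0$ and of $C_{obs}$ on $U\cap V$ follows by continuity, yielding stable observability.
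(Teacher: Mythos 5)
There is a genuine gap at the heart of your argument: the claim that the remainder $R$ in your multiplier identity ``is bounded by $C'E(u)$ independently of $T$.'' Only the time-boundary contributions $\bigl[\int_M \partial_t u\,(\nabla_{\tilde g}\ell,\nabla_{\tilde g} u)\,d\mathrm{vol}\bigr]_0^T$ have that property. The interior lower-order terms do not. Concretely, the Rellich identity with multiplier $(\nabla_{\tilde g}\ell,\nabla_{\tilde g}u)$ produces the interior density
\begin{align*}
\tfrac12(\Delta_{\tilde g}\ell)\bigl((\partial_t u)^2-|\nabla_{\tilde g} u|^2\bigr)+\nabla^2_{\tilde g}\ell(\nabla_{\tilde g} u,\nabla_{\tilde g} u),
\end{align*}
in which the coefficient of $|\nabla_{\tilde g}u|^2$ is in general negative (since $\Delta_{\tilde g}\ell\ge n\rho$ while $\nabla^2_{\tilde g}\ell(\nabla u,\nabla u)\le\Lambda|\nabla u|^2$). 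To restore coercivity one must add a zeroth-order multiplier $\psi u$ with $\psi\approx\tfrac12\Delta_{\tilde g}\ell-\rho$, and this generates the terms $\int_0^T\!\int_M u\,(\nabla\psi,\nabla u)$ and $\int_0^T\!\int_M q\,u^2$. These are integrated over $(0,T)\times M$, so by energy conservation and Poincar\'e they scale like $C\,T\,E(u)$ --- exactly the same growth in $T$ as your main positive term $\rho' T E(u)$ --- with $C$ involving $\nabla\Delta_{\tilde g}\ell$ and second derivatives of $\tilde c$, and with no mechanism to force $C<\rho'$. Taking $T$ large therefore absorbs nothing. The classical escape is a compactness--uniqueness argument, but (as the paper points out in connection with the Bardos--Lebeau--Rauch approach) that destroys precisely the uniformity of $C_{obs}$ over $U\cap V$ that the theorem asserts.

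This is why the paper does not use the plain multiplier method but a Carleman estimate with weight $e^{\tau\ell}$ and a large parameter $\tau$: after conjugation, the zeroth-order coefficient becomes $2\rho|\nabla\ell|^2\tau^3-C_1\tau^2$, and the hypothesis that $\ell$ has \emph{no critical points} ($|\nabla\ell|\ge r>0$) is exactly what lets one fix $\tau\ge C_1/(2\rho r^2)$ so that the $\tau^3$ term dominates all lower-order errors pointwise, with explicit constants (Corollary \ref{cor_carleman_ineq} and Theorem \ref{thm_obs_ineq}); the uniform-in-$\tilde c$ statement then follows by the continuity/compactness bookkeeping you describe, which matches Corollary \ref{cor_stable_observability}. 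It is telling that your sketch never actually uses the no-critical-points hypothesis in a quantitative way --- it appears only decoratively in the threshold $T_0$ --- whereas in the paper's proof it is the lever that eliminates the lower-order terms. Your perturbation analysis (persistence of $\nabla^2_{\tilde g}\ell\ge(\rho/2)\tilde g$, of $|\nabla_{\tilde g}\ell|_{\tilde g}\ge\kappa/2$, and of the boundary sign condition under $C^1$-closeness, with constants controlled by $C^2$ bounds) is sound and essentially identical to the paper's; it is the underlying observability estimate that your route cannot deliver with uniform constants.
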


We will prove Theorem \ref{thm_stable_for_semisimples} 
in Section \ref{sec_stable_observability} also for anisotropic wave speeds
by first deriving a geometric Carleman estimate for the wave equation.
The main feature of the estimate is the absence of lower order terms,
and we will follow the tradition of this type of estimates, see \cite{Kazemi1993, Lavrentcprimeev1986, Tataru1996, Triggiani2002},
where the continuous observability was studied but the dependence of the constant $C_{obs}$ on the coefficients of the equation was not considered; and \cite{Zuazua2008}, where the dependence of $C_{obs}$ on the zeroth order coefficient of the equation was studied.

There are non-simple Riemannian manifolds $(M,g)$ that admit 
strictly convex functions with no critical points
(a trivial example being a non-convex subset of the Euclidean space).
See \cite{Gulliver2000} and \cite{Miller2002} for further discussion
on the relations between simplicity, the existence of a strictly convex function
and the characterization by Bardos, Lebeau and Rauch.


To formulate our main result, let us recall that the Dirichlet-to-Neumann operator is continuous, 
\begin{align*}
\Lambda_T : H_{cc}^1((0,T) \times \p M) \to L^2((0,T) \times \p M),
\end{align*}
where $H_{cc}^1((0,T) \times \p M) := \{f \in H^1((0,T) \times \p M);\ f(0, x) = 0\}$, see \cite{Lasiecka1986}.
Moreover, in Section \ref{sec_BC} 
we show that $\Lambda_{2T}$ has the additional regularity-symmetry property, 
\begin{align}
\label{additional_smoothness_Lambda}
&K(\Lambda_{2T})  
: L^2((0,T) \times \p M) \to L^2((0,T) \times \p M),
\end{align}
where $K(\Lambda_{2T}) := R \Lambda_{T} R J \Theta - J \Lambda_{2T} \Theta$,
$R$ is the time reversal on $(0, T)$, that is $Rf(t) := f(T - t)$,
$\Theta$ is the extension by zero from $(0,T)$ to $(0,2T)$ and 
\begin{align*}
&Jf(t) := \frac{1}{2} \int_t^{2T - t} f(s) ds,
\quad f \in L^2(0, 2T),\ t \in (0, T).
\end{align*}
The additional regularity can be understood
by noticing that $K(\Lambda)$ corresponds formally 
to the operator $(\Lambda^* - \Lambda)J + [\Lambda, J]$,
where $\Lambda = \Lambda_{2T}$ and $J$ are operators of orders -1 and zero, respectively.
We denote $\Upsilon := (0,T) \times \p M$ and define
\begin{align*}
\norm{\Lambda_{2T}}_* := \norm{K(\Lambda_{2T})}_{L^2(\Upsilon) \to L^2(\Upsilon)}
+ \norm{\Lambda_{T}}_{H_{cc}^1(\Upsilon) \to L^2(\Upsilon)}.
\end{align*}
We will show that $\norm{\cdot}_*$ is a norm in the appendix below.
Our main result is the following.

\begin{theorem}
\label{thm_main}
Suppose that the wave equations (\ref{eq_wave}) are stably observable
for the wave speeds in a set $U \subset C_+^\infty(M)$, from $\p M$ in time $T > 0$.
Suppose, furthermore, that there is $\epsilon_U > 0$ such that
\begin{align*}
c(x) \ge \epsilon_U, \quad \text{for all $x \in M$ and $c \in U$}.
\end{align*}
Let $R > 0$ satisfy $M \subset B(0, R)$ and let $c \in U$. 
Then there is $C > 0$ depending on
$M$, $T$, $c$, $\epsilon_U$ and $C_{obs}$
such that for all $\tilde c \in U$
\begin{align*}
|\F\ll(\tilde c^{-2} - c^{-2}\rr)(\xi)| \le C e^{2R |\xi|} \norm{\Lambda_{\tilde c, 2T} - \Lambda_{c, 2T}}_*,
\quad \xi \in \R^n,
\end{align*}
where $\F(\rho)$, $\rho \in C^\infty(M)$, 
denotes the Fourier transform of the extension by zero of $\rho$ onto $\R^n$.
\end{theorem}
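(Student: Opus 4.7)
The plan is to combine the Boundary Control (BC) method with the complex geometric optics trick by taking harmonic exponentials as interior targets for exact boundary controllability at time $T$. For $\xi \in \R^n$, pick $\eta \in \R^n$ with $|\eta|=|\xi|$ and $\eta \perp \xi$, and set $\phi_\pm(x) := e^{i(\xi \pm i\eta)\cdot x/2}$. These are harmonic on $\R^n$, satisfy the pointwise identity $\phi_+\phi_- \equiv e^{i\xi\cdot x}$, and are uniformly bounded by $e^{R|\xi|/2}$ on $M \subset B(0,R)$, so extracting $\int_M \phi_+\phi_-(\tilde c^{-2}-c^{-2})\,dx$ will produce $\F(\tilde c^{-2}-c^{-2})(-\xi)$ with the exponential weight demanded by the theorem.

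\textbf{Controls and the BC identity.} By HUM duality, stable observability of (\ref{eq_wave}) for $c$ supplies boundary controls $f_\pm \in H_{cc}^1(\Upsilon)$ such that the solutions $u^{f_\pm}$ of (\ref{eq_wave}) in medium $c$ satisfy $u^{f_\pm}(T,\cdot) = \phi_\pm$ and $\p_t u^{f_\pm}(T,\cdot)=0$, with quantitative bounds $\|f_\pm\|_{L^2(\Upsilon)} \les C_{obs}(1+|\xi|)\, e^{R|\xi|/2}$ and $\|f_\pm\|_{H_{cc}^1(\Upsilon)} \les C_{obs}(1+|\xi|)^N e^{R|\xi|/2}$ for a fixed $N$. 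Combining the Blagoveshchenskii-type identity established in Section \ref{sec_BC}, which we write schematically as $(f, K(\Lambda_{c,2T})g)_{L^2(\Upsilon)} = \int_M u^f(T) u^g(T) c^{-2}\,dx$, with its analog for the wave speed $\tilde c$ applied with the \emph{same} controls $f_\pm$, and expanding $\tilde u^{f_\pm}(T) = \phi_\pm + \delta_\pm$, subtraction yields the master identity
\begin{align*}
\F(\tilde c^{-2}-c^{-2})(-\xi) = \bigl(f_+, (K(\Lambda_{\tilde c,2T}) - K(\Lambda_{c,2T}))f_-\bigr)_{L^2(\Upsilon)} - E,
\end{align*}
where $E := \int_M (\phi_+\delta_- + \phi_-\delta_+ + \delta_+\delta_-)\,\tilde c^{-2}\,dx$ collects the cross and quadratic errors.

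\textbf{Closing the estimate; the main obstacle.} The principal term is bounded at once by $\|K(\Lambda_{\tilde c,2T})-K(\Lambda_{c,2T})\|_{L^2\to L^2}\|f_+\|_{L^2}\|f_-\|_{L^2} \le C(1+|\xi|)^2 e^{R|\xi|} \|\Lambda_{\tilde c,2T}-\Lambda_{c,2T}\|_*$, after which the polynomial factor in $|\xi|$ is absorbed into the enlarged exponent to give $C\, e^{2R|\xi|}\|\Lambda_{\tilde c,2T}-\Lambda_{c,2T}\|_*$. The main obstacle is to control $E$: it reduces to bounding $\|\delta_\pm\|_{L^2(M)}$ by $\|\Lambda_{\tilde c,2T}-\Lambda_{c,2T}\|_*$ times an admissible norm of $f_\pm$, \emph{without} reintroducing $\|\tilde c - c\|_{L^\infty}$ (which would render the argument circular via an energy estimate on the source term $(\tilde c^2-c^2)\Delta u^{f_\pm}$). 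I would address this by re-invoking the BC identity in an auxiliary way: using that $u^{f_+}(T)=\phi_+$ in medium $c$, each cross term can be rewritten as a mixed interior pairing $\int_M u^{f_+}(T)\tilde u^{f_-}(T)\tilde c^{-2}\,dx$ minus $\F(\tilde c^{-2})(-\xi)$, and the mixed pairing is then expressed through $(f_+, K(\Lambda_{\tilde c,2T}) f_-)$ after one more layer of correction that again contributes only at the order of $\|\Lambda_{\tilde c,2T}-\Lambda_{c,2T}\|_*$ (using also the $\|\Lambda_T\|_{H_{cc}^1\to L^2}$ component of $\|\cdot\|_*$ to control the boundary discrepancies $(\Lambda_{\tilde c,T}-\Lambda_{c,T})f_\pm$). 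The uniformity of $C_{obs}$ over $U$ guaranteed by stable observability for both $c$ and $\tilde c$ keeps this iteration convergent and is precisely what turns the usual logarithmic BC stability into the Lipschitz estimate stated in the theorem.
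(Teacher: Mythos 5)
Your setup (harmonic exponentials $\phi_\pm$ with $\phi_+\phi_-=e^{i\xi\cdot x}$, the Blagoveshchenskii identity $(f,K(\Lambda_{c,2T})h)=(u^f(T),u^h(T))_{L^2(M;c^{-2}dx)}$, and the bound on the principal term by $\norm{\tilde K-K}\,\norm{f_+}\,\norm{f_-}$) matches the paper's ingredients, and your master identity is algebraically correct. But the argument has a genuine gap exactly where you locate it: the error term $E$, i.e.\ the control discrepancies $\delta_\pm=(\tilde W-W)f_\pm$, cannot be closed by the auxiliary device you sketch. The mixed interior pairing $\int_M u^{f_+}(T)\,\tilde u^{f_-}(T)\,\tilde c^{-2}\,dx$ is \emph{not} computable from boundary data by any Blagoveshchenskii-type identity: that identity hinges on both factors solving the \emph{same} wave equation, so that $(\p_t^2-\p_s^2)$ applied to $(u^f(t),u^h(s))_{L^2(M;c^{-2}dx)}$ reduces to boundary terms; with two different media a non-data interior term involving $\tilde c^{-2}-c^{-2}$ survives. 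Concretely, replacing the mixed pairing by $(f_+,\tilde K f_-)$ regenerates an error $\int_M \delta_+\,\tilde u^{f_-}(T)\,\tilde c^{-2}\,dx$ of exactly the same type and size as the one you started with; there is no small factor multiplying the successive corrections, so the ``one more layer of correction'' does not converge --- uniformity of $C_{obs}$ gives bounded operators, not a contraction. Estimating $\norm{\delta_\pm}_{L^2(M)}$ by $\norm{\tilde\Lambda_{2T}-\Lambda_{2T}}_*$ is, as you yourself observe, essentially the inverse problem again, and no route around this is supplied.

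The paper avoids the term $E$ altogether by never transporting one control across media: it uses the medium-specific, data-determined controls $W^\dagger\phi=K^\dagger B\phi$ and $\tilde W^\dagger\phi=\tilde K^\dagger\tilde B\phi$, so that $\F(c^{-2})(\xi)=(K^\dagger B\phi,B\psi)$ and its analogue for $\tilde c$ hold exactly, and the difference splits into three purely operator-perturbation terms. These are closed by two estimates you do not have: (i) the Lipschitz perturbation bound for pseudoinverses, $\norm{\tilde K^\dagger-K^\dagger}\le 3\max(\norm{\tilde K^\dagger}^2,\norm{K^\dagger}^2)\norm{\tilde K-K}\le 3C_{obs}^4\norm{\tilde K-K}$, which is where stable observability enters (it guarantees $\norm{K^\dagger},\norm{\tilde K^\dagger}\le C_{obs}^2$, i.e.\ both ranges are closed with a uniform constant); and (ii) the cancellation $(\tilde B-B)\phi=(\tilde\Lambda_T-\Lambda_T)(t\phi(x))$, where $t\phi(x)\in H^1_{cc}(\Upsilon)$ is a fixed explicit function, so only the $H^1_{cc}\to L^2$ component of $\norm{\cdot}_*$ applied to known data is needed. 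This also exposes a secondary unjustified step in your proposal: the claimed bound $\norm{f_\pm}_{H^1_{cc}}\le C\,C_{obs}(1+|\xi|)^N e^{R|\xi|/2}$ does not follow from stable observability as defined, which dualizes only to exact controllability with $L^2(\Upsilon)$ control-norm bounds; higher-regularity HUM control estimates with constants governed by $C_{obs}$ and $\epsilon_U$ alone are a nontrivial additional result, and the paper's proof is structured precisely so that no $H^1$ bound on any control is ever required.
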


\subsection{Previous literature}


We refer to the monograph \cite{Katchalov2001} and to the review article \cite{Belishev2007} for literature concerning the BC method,
and to the review article \cite{Uhlmann2009} concerning the CGO solutions.
A stability result without a modulus of continuity for the former approach 
was proved in \cite{Anderson2004} and
the first logarithmic type stability result for the latter in \cite{Alessandrini1988}.
H\"older type stability results for (IP) are proved in the above mentioned articles \cite{Stefanov1998, Stefanov2005, Bellassoued2011}.


There has been recent interest in results showing that the ill-posedness of
the inverse problem for the Helmholtz equation decreases when the frequency increases, 
see \cite{Isakov2011, Nagayasu2011} and the references therein.
Moreover, in a recent preprint \cite{Ammari2012}, Lipschitz stability for determining the low frequency component of a potential in the inverse scattering problem was established. 
This result is similar in spirit with the present one but it is based on different techniques.

As for recovering the potential in a wave equation, H\"{o}lder type stability result was first established in \cite{Sun1990}. This was improved later to an almost Lipschitz stability result with the H\"{o}lder exponent being $1-\epsilon$ in \cite{Bao2009}. 
Lipschitz type stability can be obtained if the potential is assumed to be parametrized in a finite dimensional space \cite{Rakesh1990}.
See also \cite{Alessandrini2005, Rondi2006} for a Lipschitz stability result
with finite number of parameters.

The inverse problem in the present paper is formulated with many boundary measurements. On the other hand, for a different formulation of the inverse problem with a single measurement, Lipschitz type stability results can be achieved. For example, \cite{Stefanov2011} proved Lipschitz stability for recovering the sound speed from a single measurement in the context of multi-wave imaging. 
However, such formulation typically requires non-vanishing initial data which is not in favor of practical applications if only acoustic waves are used for imaging.
The main methodology used in the inverse problems for the wave equation 
with a single measurement is based on Carleman type estimates. The technique was originated in \cite{Bukhgeuim1981} and has been developed tremendously since then. In particular, the continuous observability inequality (\ref{continuous_observability}) may be used to derive the Lipschitz type stability. For more details about the single measurement formulation, we refer to \cite{Imanuvilov2001, Isakov2006, Triggiani2011} and the references therein.

Let us also point out that the continuous observability inequality (\ref{continuous_observability}) is equivalent to the exact controllability of the wave equation, i.e. the surjectivity of the control to solution map. This well-known link to the control theory has been well studied since 1980s and we refer to the review articles \cite{Lions1988, Lasiecka2004} and the references therein for more details on the exact controllability of wave equations.   



\section{A modification of the Boundary Control method}
\label{sec_BC}

Let $\rho \in C^\infty(M)$ and let us extend $\rho$ by zero to $\R^n$.
We denote by $\F(\rho)$ the Fourier transform of the extension. 
Moreover, let us define the operator 
\def\T{\mathcal T}
\begin{align*}
&B(\Lambda_{c,T}) := R \Lambda_{c,T} R I \T_0  - I \T_1,
\end{align*}
where $\T_j$, $j=0,1$, are the first two traces on $\p M$,
that is $\T_0 \phi = \phi|_{\p M}$ and $\T_1 \phi = \p_\nu \phi|_{\p M}$,
and 
\begin{align*}
&If(t) := \int_t^{T} f(s) ds,
\quad f \in L^2(0, T),\ t \in (0, T).
\end{align*}

In this section we will prove the following reconstruction formula.
\begin{theorem}
\label{thm_rec_formula}
Let $c \in C_+^\infty(M)$, open $\Gamma \subset \p M$ and $T > 0$ satisfy the 
continuous observability inequality (\ref{continuous_observability}).
Let $\xi, \eta \in \R^n$ satisfy $|\xi| = |\eta|$ and $\xi \perp \eta$,
and define the functions
\begin{align*}
\phi_{\xi, \eta}(x) := e^{i(\xi + i \eta) \cdot x/2}, \quad
\psi_{\xi, \eta}(x) := e^{i(\xi - i \eta) \cdot x/2}.
\end{align*}
Then 
\begin{align}
\label{the_reconstruction_formula}
\F\ll(c^{-2}\rr)(\xi) 
= (K(\Lambda_{c, 2T})^\dagger B(\Lambda_{c, T}) \phi_{\xi, \eta}, 
B(\Lambda_{c, T}) \psi_{\xi, \eta})_{L^2( (0, T) \times \p M)},
\end{align}
where $K(\Lambda_{c, 2T})^\dagger$ is the pseudoinverse of $K(\Lambda_{c, 2T})$. 
\end{theorem}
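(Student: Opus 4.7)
The plan is to realise the right-hand side of \eqref{the_reconstruction_formula} as the $L^2(M, c^{-2}dx)$ inner product of two boundary-controlled wave solutions whose values at time $T$ equal the harmonic exponentials $\phi_{\xi,\eta}$ and $\psi_{\xi,\eta}$; since $|\xi|=|\eta|$ with $\xi\perp\eta$ forces $\Delta\phi_{\xi,\eta}=\Delta\psi_{\xi,\eta}=0$ and $\phi_{\xi,\eta}\psi_{\xi,\eta}=e^{i\xi\cdot x}$, that inner product is exactly $\F(c^{-2})(\xi)$. Introduce the control-to-state map $W\colon L^2(\Upsilon)\to L^2(M, c^{-2}dx)$, $Wf := u^f(T)$, and assemble the proof from three ingredients.

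The first ingredient is the identity
\begin{equation*}
(Wf, \phi)_{L^2(M, c^{-2}dx)} = (f, B(\Lambda_{c,T})\phi)_{L^2(\Upsilon)}
\end{equation*}
for every harmonic $\phi\in C^\infty(M)$. I would prove this by taking the auxiliary solution $v(t,x):=(T-t)\phi(x)$, which satisfies $v_{tt}-c^2\Delta v=0$ because $\Delta\phi=0$, and applying Green's identity to $(u^f, v)$ on $(0,T)\times M$. The volume term vanishes; the temporal boundary collapses via $u^f(0)=\partial_t u^f(0)=0$ and $v(T)=0$ to $\int_M c^{-2}u^f(T)\phi\,dx$; and the spatial boundary produces $(f, I\mathcal{T}_1\phi)_{L^2(\Upsilon)}-(\Lambda_T f, I\mathcal{T}_0\phi)_{L^2(\Upsilon)}$, upon observing that $(T-t)\mathcal{T}_j\phi = I\mathcal{T}_j\phi$ when $\mathcal{T}_j\phi$ is viewed as time-independent. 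The symmetry $\Lambda_T^* = R\Lambda_T R$, which itself follows from the same Green identity applied to a forward wave and a backward wave with zero Cauchy data at $t=T$, then rewrites this as $(f, B(\Lambda_{c,T})\phi)$.

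The second ingredient is the Blagoveshchenskii-type identity $W^*W = K(\Lambda_{c,2T})$. I would extend $f, g\in L^2(\Upsilon)$ by zero to $(0,2T)$ via $\Theta$ and analyse $P(t):=\int_M c^{-2}u^{\Theta f}(t,x)\, u^{\Theta g}(2T-t,x)\,dx$. Differentiating in $t$ and using $c^{-2}u_{tt}=\Delta u$ together with Green's identity on $M$ reduces the bulk contribution to spatial boundary terms containing $\Lambda_{2T}\Theta f$ and $\Lambda_{2T}\Theta g$; integrating back in $t$ around the symmetric point $t=T$ yields precisely the averaging $J$ and the time reversal $R$ in the combination $R\Lambda_T RJ\Theta - J\Lambda_{2T}\Theta$ defining $K(\Lambda_{c,2T})$. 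I expect this bookkeeping to be the main obstacle: the operators $\Theta$, $R$, $J$ and the mismatch between $\Lambda_T$ and the restriction of $\Lambda_{2T}$ to $(0,T)$ must interlock exactly, and each integration by parts on $(0,2T)$ contributes boundary and endpoint terms that have to land in the correct place.

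With these two ingredients, the continuous observability \eqref{continuous_observability} and HUM duality give surjectivity of $W$ onto $L^2(M, c^{-2}dx)$, so $\phi_{\xi,\eta}$ and $\psi_{\xi,\eta}$ are realised at time $T$ by the minimum-norm controls $f_\phi := K(\Lambda_{c,2T})^\dagger B(\Lambda_{c,T})\phi_{\xi,\eta}$ and $f_\psi := K(\Lambda_{c,2T})^\dagger B(\Lambda_{c,T})\psi_{\xi,\eta}$. Consequently,
\begin{equation*}
\F(c^{-2})(\xi) = \int_M c^{-2}\phi_{\xi,\eta}\psi_{\xi,\eta}\,dx = (u^{f_\phi}(T), u^{f_\psi}(T))_{L^2(M, c^{-2}dx)} = (Kf_\phi, f_\psi)_{L^2(\Upsilon)}.
\end{equation*}
Since $B(\Lambda_{c,T})\phi_{\xi,\eta} = W^*\phi_{\xi,\eta}\in\operatorname{Range}(K)$, one has $KK^\dagger B\phi_{\xi,\eta} = B\phi_{\xi,\eta}$, and the self-adjointness of $K^\dagger$ rewrites the last inner product as $(K(\Lambda_{c,2T})^\dagger B(\Lambda_{c,T})\phi_{\xi,\eta}, B(\Lambda_{c,T})\psi_{\xi,\eta})_{L^2(\Upsilon)}$, which is \eqref{the_reconstruction_formula}.
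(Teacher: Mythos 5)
Your proposal is correct and, at its core, is the paper's own argument: the paper proves exactly your first ingredient (that $B(\Lambda_{c,T})$ is the restriction of $W^*$ to harmonic functions), uses $K = W^*W$ together with the observability-implied surjectivity of $W$ to get a bounded $W^\dagger = K^\dagger W^*$ with $W W^\dagger = \operatorname{id}$, and concludes from $(\phi,\psi)_{L^2(M;c^{-2}dx)} = (W W^\dagger \phi, \psi)_{L^2(M;c^{-2}dx)} = (W^\dagger\phi, B\psi) = (K^\dagger B\phi, B\psi)$ and $\phi_{\xi,\eta}\psi_{\xi,\eta} = e^{i\xi\cdot x}$. Your final rearrangement through $(K f_\phi, f_\psi)$, using $K K^\dagger B\phi = B\phi$ (valid since $R(K) = R(W^*)$ is closed) and the self-adjointness of $K^\dagger$, is the same pseudoinverse algebra in a slightly different order. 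Even your derivation of the first ingredient is equivalent to the paper's: the paper obtains the ODE $\p_t^2 (u^f(t),\phi)_{L^2(M;c^{-2}dx)} = (\Lambda_T f(t),\phi)_{L^2(\p M)} - (f(t),\p_\nu\phi)_{L^2(\p M)}$ and integrates it twice, and since $y(T) = \int_0^T (T-t)\,y''(t)\,dt$ for vanishing Cauchy data, that double integration is precisely your auxiliary solution $v=(T-t)\phi(x)$; just watch the orientation in your Green's identity, which should deliver $(\Lambda_T f, I\mathcal{T}_0\phi) - (f, I\mathcal{T}_1\phi)$ so that $\Lambda_T^* = R\Lambda_T R$ produces $+(f, B\phi)$, not its negative.

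One substantive remark concerns your second ingredient. The paper does not prove $K(\Lambda_{c,2T}) = W^*W$ at all; it cites Blagove\v{s}\v{c}enski\u{\i} (1966) and Bingham et al.\ (2008) for the identity, so you are attempting more than the paper. As literally described, your one-variable reduction would not close: for $P(t) = (u^{\Theta f}(t), u^{\Theta g}(2T-t))_{L^2(M;c^{-2}dx)}$, one has $P'' = (u^{\Theta f}_{tt}, u^{\Theta g}(2T-t)) + (u^{\Theta f}, u^{\Theta g}_{tt}(2T-t)) - 2(\p_t u^{\Theta f}, \p_t u^{\Theta g}(2T-t))$, and applying the equation and Green's identity to the \emph{sum} of the first two terms leaves the bulk remainders $-2\int_M \grad u^{\Theta f}\cdot\grad u^{\Theta g}\,dx$ and $-2(\p_t u^{\Theta f},\p_t u^{\Theta g})_{c^{-2}}$, which are not boundary data. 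The standard repair is the two-variable function $h(t,s) := (u^{\Theta f}(t), u^{\Theta g}(s))_{L^2(M;c^{-2}dx)}$: the \emph{difference} structure gives $(\p_t^2 - \p_s^2)h(t,s) = ((\Lambda_{2T}\Theta f)(t), \Theta g(s))_{L^2(\p M)} - (\Theta f(t), (\Lambda_{2T}\Theta g)(s))_{L^2(\p M)}$ with no bulk terms, $h = \p_t h = 0$ at $t=0$, and d'Alembert's formula over the characteristic triangle $\{(t,s):\ 0\le t\le T,\ t\le s\le 2T-t\}$ evaluated at $(T,T)$ yields $h(T,T) = \int_0^T \frac12 \int_t^{2T-t}(\cdots)\,ds\,dt$; this inner average is exactly $J$, causality gives $(\Lambda_{2T}\Theta f)|_{(0,T)\times\p M} = \Lambda_T f$, and $\Lambda_T^* = R\Lambda_T R$ then assembles the combination $R\Lambda_T R J\Theta - J\Lambda_{2T}\Theta = K(\Lambda_{c,2T})$. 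With that step repaired (or simply cited, as the paper does), your proof is complete.
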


We refer to \cite{Engl1996} and \cite{Ben-Israel2003}
for the definition and general theory of pseudoinverse operators.
By inspecting the proof of Theorem \ref{thm_rec_formula} we see that a formula of the type
(\ref{the_reconstruction_formula}) can be obtained given only the restriction 
\begin{align*}
\Lambda_{c,T,\Gamma} : f \mapsto \p_\nu u^f|_{(0,T) \times \Gamma},
\quad f \in C_0^{\infty}((0, T) \times \p M),
\end{align*}
of the Dirichlet-to-Neumann map. However, we will leave the partial data problems as an object of future study.
In particular, the local data problem, with the sources $f$ supported on $(0, T) \times \Gamma$,
will require further analysis. 

\subsection{Blagove{\v{s}}{\v{c}}enski{\u\i} type identities}
The BC method is based on the following identity that originates from \cite{Blagovevsvcenskiui1966},
\begin{align}
\label{eq_Blago}
(u^f(T), u^h(T))_{L^2(M; c^{-2}(x) dx)} 
= (f, K(\Lambda_{c, 2T}) h)_{L^2( (0, T) \times \p M)},
\end{align}
where $f, h \in C_0^\infty((0,T) \times \p M)$, $T > 0$
and $u^f$ denotes the solution of (\ref{eq_wave}).
The formulation of the identity (\ref{eq_Blago})
by using the operator $K(\Lambda_{c, 2T})$
is from \cite{Bingham2008}. Let us also mention that the 
iterative time-reversal control method introduced there can be adapted to 
give an efficient implementation of the reconstruction 
formula (\ref{the_reconstruction_formula}).

We define the map $W_{c,T} f := u^f(T)$ and have, see \cite{Lasiecka1986},
\begin{align*}
W_{c,T} : L^2((0,T) \times \p M) \to L^2(M).
\end{align*}
To simplify the notation, we often write $K := K(\Lambda_{c, 2T})$, $B := B(\Lambda_{c,T})$ and $W := W_{c,T}$.
Then (\ref{eq_Blago}) yields $K = W^* W$,
and $K$ extends as a continuos operator on $L^2((0,T) \times \p M)$
by the $L^2$ continuity of $W$.

\begin{lemma}
Let $f \in C_0^\infty((0,T) \times \p M)$ and let $\phi \in C^\infty(M)$ be harmonic. 
Then 
\begin{align}
\label{eq_Blago_harmonic}
(u^f(T), \phi)_{L^2(M; c^{-2}(x) dx)} 
= (f, B \phi)_{L^2((0, T) \times \p M)}.
\end{align}
In particular, $B$ is the restriction of $W^*$ on harmonic functions.
\end{lemma}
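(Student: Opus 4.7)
The plan is to reduce everything to a double time-integration of a quadratic form. Set $G(t) := (u^f(t), \phi)_{L^2(M; c^{-2}(x)dx)}$. The zero initial conditions of $u^f$ give $G(0) = G'(0) = 0$. Differentiating under the integral, using the wave equation $\p_t^2 u^f = c^2 \Delta u^f$ (which absorbs the $c^{-2}$ weight), then applying Green's second identity together with $\Delta \phi = 0$, yields
\begin{align*}
G''(t) = (\Lambda_{c,T} f(t), \T_0 \phi)_{L^2(\p M)} - (f(t), \T_1 \phi)_{L^2(\p M)}.
\end{align*}
Integrating twice and using $G(0)=G'(0)=0$,
\begin{align*}
G(T) = \int_0^T (T-s)\, G''(s)\, ds.
\end{align*}
Since $I\mathbf{1}(s) = T-s$, the factor $(T-s)$ may be absorbed into the spatial traces, which are time-independent, giving
\begin{align*}
G(T) = (\Lambda_{c,T} f,\, I \T_0 \phi)_{L^2((0,T) \times \p M)} - (f,\, I \T_1 \phi)_{L^2((0,T) \times \p M)}.
\end{align*}

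The remaining task is to move $\Lambda_{c,T}$ from the first slot of the inner product to the second, in order to produce the operator $R \Lambda_{c,T} R$ appearing in $B(\Lambda_{c,T})$. I will establish the time-reversal duality
\begin{align*}
(\Lambda_{c,T} f,\, h)_{L^2((0,T) \times \p M)} = (f,\, R \Lambda_{c,T} R h)_{L^2((0,T) \times \p M)}
\end{align*}
for $f \in C_0^\infty((0,T) \times \p M)$ and $h$ in a class containing $h = I\T_0\phi$ (which satisfies $h(T, \cdot) = 0$ but is not compactly supported in $(0,T)$). The argument is to set $w(t, x) := u^{Rh}(T-t, x)$, so that $w$ solves the wave equation on $(0,T)\times M$ with Dirichlet trace $h$ and zero terminal data at $t=T$, and normal trace $R \Lambda_{c,T} R h$. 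Applying Green's identity in $(t,x)$ to $u^f$ and $w$, the volume terms cancel by the wave equations, while the endpoint-in-time boundary terms vanish at $t=0$ by the zero initial conditions on $u^f$ and at $t=T$ by the zero terminal conditions on $w$; only the lateral boundary pairings survive, and these give the stated identity.

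Combining both steps,
\begin{align*}
(u^f(T), \phi)_{L^2(M; c^{-2}dx)}
= (f,\, R \Lambda_{c,T} R I \T_0 \phi - I \T_1 \phi)_{L^2((0,T) \times \p M)}
= (f,\, B\phi)_{L^2((0,T) \times \p M)},
\end{align*}
which is exactly (\ref{eq_Blago_harmonic}). Recalling $W_{c,T} f = u^f(T)$, this identifies $B\phi$ with $W_{c,T}^* \phi$ on harmonic $\phi$, proving the second assertion.

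The main obstacle is the time-reversal duality for $\Lambda_{c,T}$. Conceptually it is a standard symmetry of the hyperbolic Dirichlet-to-Neumann map, but the test function $h = I \T_0 \phi$ is not compactly supported in time — it equals $T \T_0\phi$ at $t=0$ — so the backward-wave-equation construction used to realize the adjoint has to accommodate this, and care is needed in invoking the map $\Lambda_{c,T}$ on $R h \in H_{cc}^1$ rather than on $C_0^\infty$ data; this is precisely the level of regularity recalled in the paper's $H_{cc}^1 \to L^2$ continuity of $\Lambda_T$.
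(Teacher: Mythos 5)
Your proof is correct and follows essentially the same route as the paper's: you derive the ODE $G''(t) = (\Lambda_T f(t), \phi)_{L^2(\partial M)} - (f(t), \partial_\nu \phi)_{L^2(\partial M)}$ for $G(t) = (u^f(t),\phi)_{L^2(M;c^{-2}dx)}$, integrate twice using the vanishing initial data, and then transpose onto $f$, exactly as in the paper (which writes the double integral via $\mathcal I f(s)=\int_0^s f\,dt$ and invokes $\mathcal I^* = I$ and $\Lambda_T^* = R\Lambda_T R$). The only difference is that the paper cites the symmetry $\Lambda_T^* = R\Lambda_T R$ as known, while you prove it directly via the time-reversed solution $w(t,x)=u^{Rh}(T-t,x)$ and a space-time Green identity — a correct and self-contained elaboration, and you rightly flag that $h = I\,\phi|_{\partial M}$ is only in the class $h(T,\cdot)=0$ (equivalently $Rh \in H^1_{cc}$), which the $H^1_{cc}\to L^2$ continuity of $\Lambda_T$ accommodates.
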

\begin{proof}
Let $t \in (0, T)$. Then integration by parts gives 
\begin{align*}
&\p_t^2 (u^f(t), \phi)_{L^2(M; c^{-2}(x) dx)} 
\\&\quad= (\Delta u^f(t), \phi)_{L^2(M; dx)} - (u^f(t), \Delta \phi)_{L^2(M; dx)}
\\&\quad= (\Lambda_T f(t), \phi)_{L^2(\p M)} - (f(t), \p_\nu \phi)_{L^2(\p M)}.
\end{align*}
By solving this differential equation with vanishing initial conditions at $t = 0$ we get 
\def\I{\mathcal I}
\begin{align*}
&(u^f(T), \phi)_{L^2(M; c^{-2}(x) dx)} 
\\&\quad=  
\int_0^T \int_0^s (\Lambda_T f(t), \phi)_{L^2(\p M)} - (f(t), \p_\nu \phi)_{L^2(\p M)}
dt ds
\\&\quad= 
(\I \Lambda_T f, \phi)_{L^2((0, T) \times \p M)} - (\I f, \p_\nu \phi)_{L^2((0, T) \times \p M)},
\end{align*}
where $\I f(s) := \int_0^s f(t) dt$.
The equation (\ref{eq_Blago_harmonic}) follows since $\Lambda_{T}^* = R \Lambda_{T} R$
and $\I^* = I$.
\end{proof}

\subsection{Computation of boundary controls}

Let $\phi \in L^2(M)$ and consider the control equation
\begin{align}
\label{eq_control}
W f = \phi, \quad \text{for $f \in L^2((0, T) \times \p M)$.}
\end{align}
Typically $W$ is not injective and 
we can hope to solve (\ref{eq_control}) 
only in the sense of the pseudoinverse $f = W^\dagger \phi$.
It is well-known that the pseudoinverse $W^\dagger$ is a bounded operator 
if and only if $W$ has closed range $R(W)$. 
If $T > 0$ is large enough then $R(W)$ is dense in $L^2(M)$
by Tataru's unique continuation \cite{Tataru1995}. 
Hence the pseudoinverse $W^\dagger$ is a bounded operator 
if and only if $W$ is surjective.
It is well-known, see e.g. \cite{Bardos1992}, that the map
\begin{align*}
f \mapsto (u^f(T), \p_t u(T)) : L^2((0,T) \times \p M) \to L^2(M) \times H^{-1}(M) 
\end{align*}
is surjective if and only if the continuous observability (\ref{continuous_observability})
holds with $\Gamma = \p M$. 
Let us now assume that (\ref{continuous_observability}) holds.
Then 
\begin{align*}
W^\dagger : L^2(M) \to L^2((0,T) \times \p M)
\end{align*}
is continuous and $W W^\dagger$ is the identity operator.

The pseudoinverse can be written as $W^\dagger = (W^* W)^\dagger W^*$.
Moreover, $R(K) = R(W^* W) = R(W^*)$ is closed since $R(W)$ is closed.
Hence $K^\dagger$ is continuous on $L^2((0, T) \times \p M)$
and $K K^\dagger$ is the orthogonal projection onto $R(K)$.
In particular, if $\phi \in C^\infty(M)$ is harmonic, then
$W^\dagger \phi = K^\dagger B \phi$ 
can be computed from the boundary data $\Lambda_{2T}$.


Let $\phi, \psi \in C^\infty(M)$ be harmonic. 
Then the identity (\ref{eq_Blago_harmonic}) yields
\begin{align*}
&(\phi, \psi)_{L^2(M; c^{-2}(x) dx)} 
= (W W^\dagger \phi, \psi)_{L^2(M; c^{-2}(x) dx)} 
\\&\quad= (W^\dagger \phi, B \psi)_{L^2( (0, T) \times \p M)}
= (K^\dagger B \phi, B \psi)_{L^2( (0, T) \times \p M)}.
\end{align*}
Notice that the functions $\phi := \phi_{\xi, \eta}$ and 
$\psi := \psi_{\xi, \eta}$
defined in Theorem \ref{thm_rec_formula}
are harmonic and $\phi(x) \psi(x) = e^{i\xi \cdot x}$. Thus
\begin{align*}
\F\ll(c^{-2}\rr)(\xi) 
= (\phi, \psi)_{L^2(M; c^{-2}(x) dx)}
= (K^\dagger B \phi, B \psi)_{L^2( (0, T) \times \p M)}.
\end{align*}
This proves Theorem \ref{thm_rec_formula}.

\section{Stability of the reconstruction}

Let us assume that $c \in C_+^\infty(M)$,
$T > 0$ and $\Gamma = \p M$ satisfy (\ref{continuous_observability}).
We denote $\Upsilon := (0,T) \times \p M$.
Notice that $W^* \phi = \p_\nu u|_{\Upsilon}$, where
$u$ is the solution of the wave equation (\ref{eq_wave_w})
with $(u, \p_t u) = (0, \phi)$ as the initial data at $t = T$.
Hence we have 
\begin{align*}
\norm{\phi}_{L^2(M)} 
\le
C_{obs} \norm{W^* \phi}_{L^2(\Upsilon)}.
\end{align*}
In particular, $W^*$ is an injection and
$(W^*)^{-1} : R(W^*) \to L^2(M)$ satisfies
$\norm{(W^*)^{-1}}_{R(W^*) \to L^2(M)} \le C_{obs}$.
Notice that 
\begin{align*}
(W^\dagger)^* = (W^*)^\dagger = (W^*)^{-1} P_{R(W^*)},
\end{align*}
where $P_{R(W^*)}$ is the orthogonal projection onto $R(W^*)$.
Hence
\begin{align*}
\norm{W^\dagger}_{L^2(M) \to L^2(\Upsilon)} 
= \norm{(W^*)^\dagger}_{L^2(\Upsilon) \to L^2(M)}
\le C_{obs}.
\end{align*}
Moreover, $(W^* W)^\dagger = W^\dagger (W^*)^\dagger$, which implies 
\begin{align*}
\norm{K^\dagger}_{L^2(\Upsilon) \to L^2(\Upsilon)}  
= \norm{(W^* W)^\dagger}_{L^2(\Upsilon) \to L^2(\Upsilon)} 
\le C_{obs}^2.
\end{align*}
We are now ready to prove Theorem \ref{thm_main} formulated in the introduction.

\begin{proof}[Proof of Theorem \ref{thm_main}]
Let $c, \tilde c \in U$ and denote
$\Lambda_{T} = \Lambda_{c, T}$,
and $\tilde \Lambda_{T} = \Lambda_{\tilde c, T}$
and let us define $W$, $\tilde W$,
$K$, $\tilde K$, $B$ and $\tilde B$ analogously.
From now on we will omit writing $L^2(\Upsilon)$ as a subscript.
We have, see e.g. \cite{Izumino1983},
\begin{align*}
\norm{\tilde K^\dagger - K^\dagger}
\le 3 \max(\norm{\tilde K^\dagger}^2, \norm{K^\dagger}^2)
\norm{\tilde K - K} \le 3 C_{obs}^4 \norm{\tilde K - K}.
\end{align*}

Notice that for $\phi \in C^\infty(M)$ the function
\begin{align*}
(R I \T_0 \phi)(t, x) = t \phi(x), \quad t \in [0, T],\ x \in \p M,
\end{align*}
is in $H_{cc}^1(\Upsilon)$.
Thus there is $C_0 > 0$ depending only on $T$ and $M$ such that
\begin{align*}
\norm{(\tilde B - B) \phi}
&= \norm{(\tilde \Lambda_T - \Lambda_T) R I \T_0 \phi}
\\&\le C_0 \norm{\tilde \Lambda_T - \Lambda_T}_{H^1_{cc}(\Upsilon) \to L^2(\Upsilon)} \norm{\phi}_{C^1(\p M)}.
\end{align*}
Moreover, if $\phi \in C^\infty(M)$ is harmonic then 
\begin{align*}
\norm{B \phi} = \norm{W^* \phi} \le C_c \norm{\phi}_{L^2(M; c^{-2} dx)},
\end{align*}
where we have denoted $C_c := \norm{W^*}_{L^2(M) \to L^2(\Upsilon)}$.

Let $\phi, \psi \in C^\infty(M)$ be harmonic. Then
\begin{align*}
&|(\phi, \psi)_{L^2(M; \tilde c^{-2} dx)} - (\phi, \psi)_{L^2(M; c^{-2} dx)}|
\le
|((\tilde K^\dagger - K^\dagger) B \phi, B \psi)|
\\&\quad\quad+
|(\tilde K^\dagger B \phi, (\tilde B - B) \psi)|
+ 
|(\tilde K^\dagger (\tilde B - B) \phi, \tilde B \psi)|.
\end{align*}
We have
\begin{align*}
&|((\tilde K^\dagger - K^\dagger) B \phi, B \psi)|
\\&\quad\le
3C_{obs}^4 C_c^2 \norm{\tilde K - K} \norm{\phi}_{L^2(M; c^{-2} dx)} \norm{\psi}_{L^2(M; c^{-2} dx)},
\\&|(\tilde K^\dagger B \phi, (\tilde B - B)\psi)|
\\&\quad\le
C_{obs}^2 C_c C_0 
\norm{\tilde \Lambda_T - \Lambda_T}_{H_{cc}^1(\Upsilon) \to L^2(\Upsilon)}
\norm{\phi}_{L^2(M; c^{-2} dx)} \norm{\psi}_{C^1(\p M)}.
\end{align*}
Notice that $(\tilde K^\dagger)^* = \tilde K^\dagger$ since $\tilde K^* = \tilde K$. 
Moreover, as $\tilde K^\dagger \tilde B \psi = \tilde W^\dagger \phi$,
 we have
\begin{align*}
&|(\tilde K^\dagger (\tilde B - B) \phi, \tilde B \psi)|
= |((\tilde B - B) \phi, \tilde W^\dagger \psi)|
\\&\quad\le
C_0 C_{obs} \norm{\tilde \Lambda_T - \Lambda_T}_{H_{cc}^1(\Upsilon) \to L^2(\Upsilon)}
\norm{\phi}_{C^1(\p M)} \norm{\psi}_{L^2(M; \tilde c^{-2} dx)}.
\end{align*}
Hence there is a constant $C > 0$ depending on $M$, $T$, $c$, $\epsilon_U$ and $C_{obs}$
such that for all $\tilde c \in U$ and harmonic $\phi, \psi \in C^1(M)$
\begin{align}
\label{stability_for_harmonic}
&|(\phi, \psi)_{L^2(M; \tilde c^{-2} dx)} - (\phi, \psi)_{L^2(M; c^{-2} dx)}|
\\\nonumber&\quad\le
C \ll( \norm{\tilde K - K} +  \norm{\tilde \Lambda_T - \Lambda_T}_{H_{cc}^1(\Upsilon) \to L^2(\Upsilon)} \rr)
\norm{\phi}_{C^1(M)} \norm{\psi}_{C^1(M)}.
\end{align}

Let $\xi, \eta \in \R^n$ satisfy
$|\xi| = |\eta|$ and define
$\phi(x) := e^{i(\xi + i \eta) \cdot x/2}$.
Then $|\phi(x)| \le e^{R |\xi|/2}$ and
\begin{align*}
|\p_{x^j} \phi(x)| \le (|\xi_j| + |\eta_j|) e^{R |\eta|/2} \le 2 |\xi| e^{R |\xi|/2}
\le C_R e^{R |\xi|},
\end{align*}
where $C_R > 0$ is a constant depending only on $R$.
Hence (\ref{stability_for_harmonic}) implies that there is $C > 0$ such that
\begin{align*}
|\F\ll(\tilde c^{-2} - c^{-2}\rr)(\xi)| \le C e^{2R |\xi|} \norm{\tilde \Lambda_{2T} - \Lambda_{2T}}_*,
\quad \xi \in \R^n,\ \tilde c \in U.
\end{align*}
\end{proof}

\section{Stable observability}
\label{sec_stable_observability}

In this section we will prove Theorem \ref{thm_stable_for_semisimples} formulated in the introduction.
As the proof is of geometric nature, we will 
consider the wave equation 
\begin{align*}
&\p_t^2 u - \Delta_{g,\mu} u = 0, &\text{in $(0, T) \times M$}, \\
 &u|_{(0, T) \times \p M} = 0, &\text{in $(0, T) \times \p M$}.
\end{align*}
on a smooth compact Riemannian manifold $(M,g)$
with boundary. Here $\Delta_{g,\mu}$ is the weighted Laplace-Beltrami operator,
\begin{align*}
\Delta_{g,\mu} u = \mu^{-1} \div_g (\mu \grad_g u),
\end{align*}
where $\mu \in C^\infty(M)$ is strictly positive
and $\div_g$ and $\grad_g$ denote the divergence and the gradient 
with respect to the metric tensor $g$.
To prove Theorem \ref{thm_stable_for_semisimples}
we will apply the results proven in this section to $g = c(x)^{-2} dx^2$
and $\mu(x) = c(x)^{n-2}$. Then $\Delta_{g,\mu} = c(x)^2 \Delta$, where $\Delta$ is the Euclidean Laplacian. 

We denote by $|\cdot|_g$, $(\cdot, \cdot)_g$, $dV_g$, $dS_g$, $\nu_g$ and $D^2_g$
the norm, the inner product,
the volume and the surface measures, the exterior unit normal vector
and the Hessian with respect to $g$,
and will omit writing the subscript $g$ when considering a fixed Riemannian metric tensor.
We recall that the Hessian satisfies
\begin{align*}
D^2 w(X, Y) = (D_X \grad w, Y), 
\quad
D^2 w(X, Y) = D^2 w(Y, X),
\end{align*}
where $w \in C^2(M)$, $X, Y \in TM$ and 
$D_X$ is the covariant derivative with respect to $g$.
We denote $\div_\mu X := \mu^{-1} \div (\mu X)$ and have the formula
\begin{align*}
\div_\mu (w X) 
= (\grad w, X) + w \div_\mu X.
\end{align*}
We will obtain a stable observability inequality from a Carleman estimate 
similar to that in \cite{Triggiani2002}.

\begin{lemma}
\label{lem_carleman_eq}
Let $\ell \in C^2(M)$, $\psi \in C^1(M)$ and let $w \in C^2(\R \times M)$.
We denote $\phi := \Delta_\mu \ell - \psi$ and $\tilde q := \phi - |\nabla \ell|^2$ and define
\begin{align*}
v := e^\ell w, \quad 
\vartheta := \psi v + 2(\grad v, \grad \ell), \quad
Y := ((\p_t v)^2 + |\grad v|^2 - \tilde q v^2) \grad \ell.
\end{align*}
Then
\begin{align}
\label{eq_carleman}
&e^{2 \ell} (\p_t^2 w - \Delta_\mu w)^2/2 
- \p_t (\vartheta\p_t v) + \div_\mu(\vartheta \grad v) + \div_\mu Y
\\\nonumber&\quad=
(\p_t^2 v - \Delta_\mu v + \tilde q v)^2/2 + \vartheta^2/2
\\\nonumber&\quad\quad
+ \phi (\p_t v)^2 - \phi |\nabla v|^2 + 2 D^2 \ell(\grad v, \grad v)
\\\nonumber&\quad\quad
+ v (\grad \psi , \grad v) + (\tilde q \psi - \div_\mu (\tilde q \grad \ell)) v^2.
\end{align}
Moreover, 
\begin{align*}
\tilde q \psi - \div_\mu (\tilde q \grad \ell)
&= \phi |\grad \ell|^2 + 2 D^2 \ell (\grad \ell, \grad \ell)
\\&\quad
+ \phi \psi - \div_\mu(\phi \grad \ell).
\end{align*}
\end{lemma}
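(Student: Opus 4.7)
The identity is a pointwise Carleman commutator calculation. The plan is, first, to conjugate the wave operator by $e^\ell$ and split the result into a formally self-adjoint part $P$ and a formally skew-adjoint part $Q$ (so that $e^\ell(\p_t^2 - \Delta_\mu)e^{-\ell} v = P + Q$ with $Q = \vartheta$); second, to use the elementary algebraic identity
\begin{align*}
\tfrac12 e^{2\ell}(\p_t^2 w - \Delta_\mu w)^2 = \tfrac12 P^2 + PQ + \tfrac12 Q^2
\end{align*}
to reduce the proof to a pointwise computation of the cross term $PQ$; and third, to rewrite $PQ$ as a divergence-in-time, a spatial $\div_\mu$, and the bulk density that appears on the right-hand side of (\ref{eq_carleman}).

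\textbf{Conjugation.} A direct computation using $\grad(e^{-\ell} v) = e^{-\ell}(\grad v - v\grad\ell)$ and $\div_\mu(fX) = (\grad f, X) + f\div_\mu X$ gives
\begin{align*}
e^\ell\Delta_\mu(e^{-\ell}v) = \Delta_\mu v - 2(\grad v, \grad\ell) + v(|\grad\ell|^2 - \Delta_\mu\ell).
\end{align*}
Since $\Delta_\mu \ell = \phi + \psi$ and $|\grad\ell|^2 - \Delta_\mu\ell = -\tilde q - \psi$, this yields $e^\ell(\p_t^2 w - \Delta_\mu w) = P + Q$ with $P := \p_t^2 v - \Delta_\mu v + \tilde q v$ and $Q := \vartheta$.

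\textbf{Computation of $PQ$.} I would split $PQ$ into the three contributions $(\p_t^2 v)Q$, $(-\Delta_\mu v)Q$, $(\tilde q v)Q$ and process each by the same two-step recipe: pull out an exact time-derivative or $\div_\mu$, then convert the leftover $\grad\ell$-directional derivative using $(\grad\ell, \grad h) = \div_\mu(h\grad\ell) - h\Delta_\mu\ell = \div_\mu(h\grad\ell) - h(\phi + \psi)$. For the time piece, $(\p_t^2 v)\vartheta = \p_t(\vartheta\,\p_t v) - \psi(\p_t v)^2 - (\grad\ell, \grad(\p_t v)^2)$, and the last term becomes a $\div_\mu$ plus $(\phi + \psi)(\p_t v)^2$. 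For the Laplacian piece, use $f\Delta_\mu v = \div_\mu(f\grad v) - (\grad f, \grad v)$ with $f = \psi v$ and $f = 2(\grad\ell, \grad v)$; the two $-\div_\mu$ terms combine into $-\div_\mu(\vartheta\grad v)$, and the remaining inner product $(\grad(\grad\ell, \grad v), \grad v)$ is handled by the Hessian identity $(\grad(X v), \grad v) = D^2\ell(\grad v, \grad v) + D^2 v(\grad\ell, \grad v)$ together with the symmetry $2 D^2 v(\grad\ell, \grad v) = (\grad\ell, \grad|\grad v|^2)$, producing exactly the bulk density $2D^2\ell(\grad v, \grad v) - \phi|\grad v|^2 + v(\grad\psi, \grad v)$ plus a $\div_\mu(|\grad v|^2 \grad\ell)$ contribution. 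For the zeroth-order piece, $\tilde q v \cdot 2(\grad v, \grad\ell) = \tilde q (\grad\ell, \grad v^2) = \div_\mu(\tilde q v^2 \grad\ell) - v^2 \div_\mu(\tilde q \grad\ell)$, which produces the $(\tilde q \psi - \div_\mu(\tilde q \grad\ell))v^2$ bulk term. The three $\div_\mu(\cdot\,\grad\ell)$ contributions assemble into $-\div_\mu Y$, giving (\ref{eq_carleman}).

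\textbf{The ``moreover'' identity.} Writing $\tilde q = \phi - |\grad\ell|^2$, expand
\begin{align*}
\div_\mu(\tilde q\grad\ell) = \div_\mu(\phi\grad\ell) - \div_\mu(|\grad\ell|^2 \grad\ell),
\end{align*}
and apply $\div_\mu(|\grad\ell|^2\grad\ell) = 2D^2\ell(\grad\ell, \grad\ell) + |\grad\ell|^2 \Delta_\mu\ell$ together with $\Delta_\mu\ell = \phi + \psi$. Subtracting from $\tilde q\psi = \phi\psi - |\grad\ell|^2\psi$ and cancelling the $|\grad\ell|^2\psi$ terms gives the stated formula.

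\textbf{Expected obstacle.} Nothing is deep here; the only delicate step is the Hessian manipulation $(\grad((\grad\ell, \grad v)), \grad v) = D^2\ell(\grad v, \grad v) + \tfrac12(\grad\ell, \grad|\grad v|^2)$, which is the one place where the Riemannian (as opposed to Euclidean) structure enters. The rest is bookkeeping, and the main risk is sign errors when collecting the three $\div_\mu(\cdot\,\grad\ell)$ pieces into $Y$.
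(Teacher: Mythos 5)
Your route is the paper's route, essentially verbatim: the paper also writes $e^{\ell}(\p_t^2 w - \Delta_\mu w) = P + \vartheta$ with $P = \p_t^2 v - \Delta_\mu v + \tilde q v$ (computing the conjugation via $\Delta_\mu v$ rather than $\Delta_\mu(e^{-\ell}v)$, an immaterial difference), expands the square, and processes the three cross terms $\vartheta\, \p_t^2 v$, $-\vartheta\, \Delta_\mu v$ and $\tilde q v\, \vartheta$ by exactly the divergence and Hessian manipulations you describe; your three displayed reductions are literally the paper's (\ref{TY_pt}), (\ref{TY_Delta}) and (\ref{TY_zeroth}), and your proof of the ``moreover'' identity coincides with the paper's.

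There is, however, one step you assert without checking --- ``the three $\div_\mu(\cdot\,\grad\ell)$ contributions assemble into $-\div_\mu Y$'' --- and with the $Y$ of the statement it is false; your own correctly derived signs show this. The time piece contributes $-\div_\mu((\p_t v)^2\grad\ell)$ to the cross term $P\vartheta$, whereas the Laplacian and zeroth-order pieces contribute $+\div_\mu(|\grad v|^2\grad\ell)$ and $+\div_\mu(\tilde q v^2\grad\ell)$, so the fluxes assemble into
\begin{align*}
-\div_\mu\big(((\p_t v)^2 - |\grad v|^2 - \tilde q v^2)\grad\ell\big),
\end{align*}
i.e.\ the identity (\ref{eq_carleman}) holds with $Y = ((\p_t v)^2 - |\grad v|^2 - \tilde q v^2)\grad\ell$ and fails with the stated $Y$, the discrepancy being $2\div_\mu(|\grad v|^2\grad\ell)$. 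A one-dimensional Euclidean check confirms this: with $\mu = 1$, $\ell = x$, $\psi = 0$ and $v = x^2$, the left-hand side of (\ref{eq_carleman}) with the stated $Y$ exceeds the right-hand side by $16x$, while the corrected $Y$ gives equality. To be fair, this sign slip is in the paper itself: its intermediate identities agree with yours and force the corrected $Y$, so the statement's $Y$ is a typo rather than an error in either argument, and it is harmless downstream --- since $v = \p_t v = 0$ on $[0,T]\times\p M$, the corrected flux gives $\vartheta(\grad v, \nu) + (Y, \nu) = e^{2\tau\ell}\tau(\grad w,\nu)^2(\grad\ell,\nu)$ (coefficient $1$ instead of $3$ in Lemma \ref{lem_boundary_terms}), which has the same favorable sign on $\Gamma$, and the interior bounds in Corollary \ref{cor_carleman_ineq} are untouched. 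But a finished write-up of your proof must either record the corrected $Y$ or exhibit the cancellation explicitly; the assembly as you state it would not close, which is precisely the ``main risk'' you flagged at the end.
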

\begin{proof}
Notice that
\begin{align*}
\Delta_\mu v &= \div_\mu( \grad (e^\ell w)) 
= \div_\mu (v \grad \ell + e^\ell \grad w)
\\&= (\grad v, \grad \ell) + v \Delta_\mu \ell + (\grad e^\ell, \grad w) + e^\ell \Delta_\mu w,
\\&= 2(\grad v, \grad \ell) +  (\Delta_\mu \ell - |\nabla \ell|^2) v + e^\ell \Delta_\mu w,
\end{align*}
since
\begin{align*}
(\grad e^\ell, \grad w) &= (e^\ell \grad \ell, \grad w) 
= (\grad \ell, \grad (e^\ell w)) - (\grad \ell, w \grad e^\ell)
\\&= (\grad \ell, \grad v) - v |\grad \ell|^2.
\end{align*}
We expand the square 
\begin{align}
\label{TY_expansion}
&e^{2\ell} (\p_t^2 w - \Delta_\mu w)^2 
= (\p_t^2 v - \Delta_\mu v  + \tilde q v + \vartheta)^2
\\\nonumber&\quad= (\p_t^2 v - \Delta_\mu v  + \tilde q v)^2 + \vartheta^2
 + 2(\p_t^2 v - \Delta_\mu v + \tilde qv)\vartheta,
\end{align}
and study the cross terms. We have
\begin{align}
\label{TY_pt}
\vartheta \p_t^2 v 
&= 
\p_t (\vartheta \p_t v) - \p_t \vartheta\, \p_t v
\\\nonumber&= 
\p_t (\vartheta \p_t v) - \psi (\p_t v)^2 - 2 \p_t v (\grad \p_t v, \grad \ell)
\\\nonumber&= 
\p_t (\vartheta \p_t v) - \div_\mu ((\p_t v)^2 \grad \ell) + (\Delta_\mu \ell - \psi)(\p_t v)^2,
\end{align}
since
\begin{align*}
2 \p_t v (\grad \p_t v, \grad \ell) = 
(\grad (\p_t v)^2, \grad \ell)
= \div_\mu ((\p_t v)^2 \grad \ell) - (\p_t v)^2 \Delta_\mu \ell.
\end{align*}
Similarly, 
\begin{align}
\label{TY_Delta}
\vartheta \Delta_\mu v 
&= \div_\mu(\vartheta \grad v) - (\grad \vartheta, \grad v)
\\\nonumber&= \div_\mu (\vartheta \grad v) - v (\grad \psi, \grad v) - \psi |\grad v|^2
- 2(\grad (\grad v, \grad \ell), \grad v),
\\\nonumber&= \div_\mu (\vartheta \grad v - |\grad v|^2 \grad \ell) - v (\grad \psi, \grad v) + (\Delta_\mu \ell - \psi) |\grad v|^2
\\\nonumber&\quad
- 2D^2 \ell(\grad v, \grad v),
\end{align}
since
\begin{align*}
2(\grad (\grad v, \grad \ell), \grad v) 
&= 2(D_{\grad v} \grad v, \grad \ell) + 2(\grad v, D_{\grad v} \grad \ell)
\\&= 2D^2 v(\grad v, \grad \ell) + 2D^2 \ell(\grad v, \grad v),
\end{align*}
and
\begin{align*}
2D^2 v(\grad v, \grad \ell) &= 2D^2 v(\grad \ell, \grad v) = 2(D_{\grad \ell} \grad v, \grad v)
= (\grad \ell, \grad |\grad v|^2)
\\&= \div_\mu(|\grad v|^2 \grad \ell) - |\grad v|^2 \Delta_\mu \ell.
\end{align*}
Finally,
\begin{align}
\label{TY_zeroth}
\vartheta \tilde q v &= \tilde q \psi v^2 + 2 \tilde q v (\grad v, \grad \ell)
\\\nonumber&= (\tilde q \psi - \div_\mu (\tilde q \grad \ell)) v^2 + \div_\mu( v^2 \tilde q \grad \ell),
\end{align}
since
\begin{align*}
&2 \tilde q v (\grad v, \grad \ell) 
= (\grad v^2, \tilde q \grad \ell) 
= \div_\mu( v^2 \tilde q \grad \ell) - v^2 \div_\mu (\tilde q \grad \ell).
\end{align*}
The first claim follows by inserting (\ref{TY_pt}), (\ref{TY_Delta}) and (\ref{TY_zeroth}) into (\ref{TY_expansion}).

For the second claim notice that
\begin{align*}
\tilde q \psi - \div_\mu (\tilde q \grad \ell)
&= (\phi - |\nabla \ell|^2)\psi - \div_\mu((\phi - |\nabla \ell|^2)\grad \ell)
\\&= (\Delta_\mu \ell - \psi) |\grad \ell|^2 + 2 D^2 \ell (\grad \ell, \grad \ell)
\\&\quad
+ \phi \psi - \div_\mu(\phi \grad \ell),
\end{align*}
since 
\begin{align*}
\div_\mu(|\grad \ell|^2 \grad \ell) = (\grad |\grad \ell|^2, \grad \ell) + |\grad \ell|^2 \Delta_\mu \ell,
\end{align*}
and
\begin{align*}
(\grad |\grad \ell|^2, \grad \ell) = 2(D_{\grad \ell} \grad \ell, \grad \ell) = 2 D^2 \ell (\grad \ell, \grad \ell).
\end{align*}
\end{proof}

\begin{corollary}[Pointwise Carleman inequality]
\label{cor_carleman_ineq}
Let $\ell \in C^3(M)$ and $\rho > 0$ satisfy
\begin{align}
\label{carleman_ineq_rho}
D^2 \ell (X, X) \ge \rho |X|^2, \quad X \in T_x M,\ x \in M.
\end{align}
Let $\tau > 0$ and $w \in C^2(\R \times M)$.
We define
\begin{align*}
&v := e^{\tau \ell} w, \quad 
\vartheta := \tau ((\Delta_\mu \ell - \rho) v + 2 (\grad v, \grad \ell)),
\\&
Y := \tau ((\p_t v)^2 + |\grad v|^2 - (\tau \rho - \tau^2 |\nabla \ell|^2) v^2) \grad \ell.
\end{align*}
Then
\begin{align*}
&e^{2 \tau\ell} (\p_t^2 w - \Delta_\mu w)^2/2 
- \p_t (\vartheta\p_t v) + \div_\mu(\vartheta \grad v) + \div_\mu Y
\\&\quad\ge
e^{2 \tau\ell}(\rho \tau - 1)((\p_t w)^2 + |\nabla w|^2)/2
+ e^{2 \tau\ell}(2 \rho |\grad \ell|^2 \tau - C_1) \tau^2 w^2,
\end{align*}
where $C_1 = \rho^2 + \max_{x \in M} |\grad (\Delta_\mu \ell(x))|^2$.
\end{corollary}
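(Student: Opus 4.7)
The plan is to apply Lemma \ref{lem_carleman_eq} with the weight $\ell$ replaced by $\tau \ell$ and with the specific choice $\psi := \tau(\Delta_\mu \ell - \rho)$. With this choice the auxiliary function $\phi := \Delta_\mu(\tau \ell) - \psi$ reduces to the constant $\tau \rho$, and then $\tilde q = \tau\rho - \tau^2|\grad \ell|^2$, while the expressions for $\vartheta$ and $Y$ produced by the lemma coincide with the ones stated in the corollary.

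Having substituted into (\ref{eq_carleman}), I would discard the nonnegative squares $(\p_t^2 v - \Delta_\mu v + \tilde q v)^2/2$ and $\vartheta^2/2$, and then invoke the strict convexity assumption (\ref{carleman_ineq_rho}) twice. First, applied to $\grad v$, it gives
\begin{align*}
\phi(\p_t v)^2 - \phi|\grad v|^2 + 2 D^2(\tau\ell)(\grad v, \grad v) \ge \tau\rho\ll((\p_t v)^2 + |\grad v|^2\rr).
\end{align*}
Second, I combine the second identity of Lemma \ref{lem_carleman_eq} with the elementary computation $\phi \psi - \div_\mu(\phi \grad(\tau\ell)) = -\tau^2 \rho^2$ (which holds since $\phi = \tau\rho$ is constant) and convexity applied to $\grad(\tau \ell)$ to obtain the pointwise bound
\begin{align*}
\tilde q \psi - \div_\mu(\tilde q \grad(\tau\ell)) \ge 3 \tau^3 \rho|\grad \ell|^2 - \tau^2 \rho^2.
\end{align*}

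The only remaining indefinite term on the right-hand side of (\ref{eq_carleman}) is $v(\grad \psi, \grad v) = \tau v(\grad(\Delta_\mu \ell), \grad v)$, which I estimate by Cauchy--Schwarz so that a fraction of $|\grad v|^2$ is absorbed and a contribution proportional to $\tau^2 \max|\grad(\Delta_\mu \ell)|^2 \cdot v^2$ is pushed into the zeroth-order coefficient. It then remains to undo the substitution $v = e^{\tau\ell} w$ via
\begin{align*}
(\p_t v)^2 = e^{2\tau\ell}(\p_t w)^2, \qquad v^2 = e^{2\tau\ell} w^2,
\end{align*}
together with
\begin{align*}
|\grad v|^2 = e^{2\tau\ell}\ll(|\grad w|^2 + 2\tau w(\grad \ell, \grad w) + \tau^2 w^2 |\grad \ell|^2\rr).
\end{align*}
The unwanted cross term $2\tau w(\grad \ell, \grad w)$ is handled by AM--GM, costing a further fraction of $|\grad w|^2$ together with a term of size $\tau^3 |\grad\ell|^2 w^2$ that is compensated by the surplus $3\tau^3 \rho |\grad \ell|^2$ from the zeroth-order bound above.

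I expect the main obstacle to be purely bookkeeping: the three separate absorptions (Cauchy--Schwarz on $v(\grad \psi, \grad v)$, the cross term generated by $\grad v = e^{\tau \ell}(\grad w + \tau w \grad\ell)$, and the gap between $3\tau^3 \rho |\grad\ell|^2$ and the claimed $2\tau^3 \rho |\grad\ell|^2$) must each consume a carefully chosen fraction of the available positive mass so that the final coefficients come out as $(\rho \tau - 1)/2$ on $(\p_t w)^2 + |\grad w|^2$ and $(2\rho|\grad\ell|^2 \tau - C_1)\tau^2$ on $w^2$, with $C_1 = \rho^2 + \max_{x\in M} |\grad(\Delta_\mu \ell(x))|^2$. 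No further geometric input is needed beyond what Lemma \ref{lem_carleman_eq} and the strict convexity hypothesis (\ref{carleman_ineq_rho}) already supply.
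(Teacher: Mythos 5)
Your proposal follows essentially the same route as the paper's own proof: the same substitution of $\tau\ell$ and $\psi = \tau(\Delta_\mu \ell - \rho)$ into Lemma \ref{lem_carleman_eq} (so that $\phi = \tau\rho$), discarding the two squares, applying (\ref{carleman_ineq_rho}) to $\grad v$ and to $\grad \ell$, Cauchy--Schwarz on $v(\grad\psi, \grad v)$, and the final absorption $e^{-2\tau\ell}|\grad v|^2 \ge \frac{1}{2}|\grad w|^2 - \tau^2 |\grad \ell|^2 w^2$ whose cost is paid out of the surplus $3\rho|\grad\ell|^2\tau^3$. All the absorption constants you outline do close up to the stated $C_1 = \rho^2 + \max_{x\in M}|\grad(\Delta_\mu \ell(x))|^2$, so the bookkeeping you defer works out exactly as in the paper.
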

\begin{proof}
We invoke Lemma \ref{lem_carleman_eq} 
with $\ell$ replaced by $\tau \ell$ and $\psi = \tau(\Delta_\mu \ell - \rho)$.
Then $\phi= \tau \rho$.
Notice that the two first terms on the right-hand side of (\ref{eq_carleman}) are positive.
We employ (\ref{carleman_ineq_rho}) for $X = \grad v$ and for $X = \grad \ell$ to get
\begin{align*}
&e^{2 \tau \ell} (\p_t^2 w - \Delta_\mu w )^2/2 
- \p_t (\vartheta\p_t v) + \div_\mu(\vartheta \grad v) + \div_\mu Y
\\&\quad\ge
\rho \tau (\p_t v)^2 - \rho \tau |\nabla v|^2 + 2 \rho \tau |\grad v|^2
+ v (\tau \grad (\Delta_\mu \ell - \rho) , \grad v)
\\&\quad\quad
+ (\rho |\grad \ell|^2 + 2 \rho |\grad \ell|^2)\tau^3 v^2
+ (\rho(\Delta_\mu \ell - \rho) - \rho \div_\mu(\grad \ell)) \tau^2 v^2
\\&\quad\ge
\rho \tau (\p_t v)^2 + (\rho \tau - 1) |\nabla v|^2 
+ 3 \rho |\grad \ell|^2 \tau^3 v^2 
- (\rho^2 + |\grad \Delta_\mu \ell|^2) \tau^2 v^2.
\end{align*}
The claim follows by noticing that
\begin{align*}
e^{-2\tau \ell} |\grad v|^2 = |\tau w \grad \ell + \grad w|^2
\ge \frac{1}{2} |\grad w|^2 - |\grad \ell|^2 \tau^2 w^2.
\end{align*}
\end{proof}

\begin{lemma}
\label{lem_boundary_terms}
Let $T > 0$ and let $w \in C^2([0, T] \times M)$ satisfy $w(t, x) = 0$ for $(t, x) \in [0, T] \times \p M$.
Let $\tau, \rho > 0$ and let $\ell \in C^3(M)$.
We define $v$, $\vartheta$ and $Y$ as in Corollary \ref{cor_carleman_ineq}.
Moreover, we denote $dm := \mu dV$ and
\begin{align}
\label{def_Gamma}
\Gamma := \{ x \in \p M;\ (\grad \ell, \nu) > 0 \}.
\end{align}
Then 
\begin{align*}
&\int_0^T \int_M \div_\mu(\vartheta \grad v) + \div_\mu Y\, dm dt 
\le 
C_2 e^{B_\ell \tau} \tau \int_0^T \int_{\Gamma} (\p_\nu w)^2 \mu dS dt,
\end{align*}
where $B_\ell = 2\max_{x \in M} \ell(x)$ and $C_2 = 3 \max_{x \in M} |\grad \ell(x)|$. Moreover,
\begin{align*}
\int_M |\vartheta \p_t v| dm \le (C_3 \tau + C_4) e^{B_\ell \tau} \tau \int_M (\p_t w)^2 + |\grad w|^2 dm,
\end{align*}
where 
\begin{align*}
C_3 = C_F \max_{x \in M} |\grad \ell(x)|^2, \quad
C_4 = \max_{x \in M} |\grad \ell| + C_F \max_{x \in M}|\Delta_\mu \ell - \rho|/2
\end{align*}
and $C_F \ge 1$ is a constant satisfying the
Friedrichs' inequality 
\begin{align}
\label{ineq_Friedrichs}
\int_M \phi^2 dm \le C_F \int_M |\grad \phi|^2 dm, \quad \phi \in C_0^\infty(M).
\end{align}
\end{lemma}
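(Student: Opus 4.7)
The plan is to handle the two estimates separately. The first is a boundary-only identity obtained by applying the divergence theorem to the volume integrand and exploiting the Dirichlet condition $w|_{[0,T] \times \p M} = 0$, while the second is a pointwise AM--GM estimate followed by an application of Friedrichs' inequality to absorb the zeroth-order $w^2$ terms that arise.

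For the first claim, I would write
\begin{align*}
\int_M \div_\mu(\vartheta \grad v) + \div_\mu Y\, dm = \int_{\p M} \bigl(\vartheta (\grad v, \nu) + (Y, \nu)\bigr) \mu\, dS
\end{align*}
using the weighted divergence theorem (valid since $dm = \mu\, dV$). Since $v = e^{\tau \ell} w$ vanishes on $\p M$, its tangential derivatives there also vanish, so $\grad v = (\p_\nu v) \nu$ and $\p_t v|_{\p M} = 0$. Substituting these identities into the definitions of $\vartheta$ and $Y$ collapses all of the $v$- and $(\p_t v)^2$-terms, producing the clean boundary integrand $3\tau (\p_\nu v)^2 (\grad \ell, \nu)$. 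Discarding the portion of $\p M$ where $(\grad \ell, \nu) \le 0$ (which only decreases the left-hand side) restricts the integration to $\Gamma$ from (\ref{def_Gamma}); bounding $(\grad \ell, \nu) \le |\grad \ell|$ and using $(\p_\nu v)^2 = e^{2\tau \ell}(\p_\nu w)^2 \le e^{B_\ell \tau}(\p_\nu w)^2$ on $\p M$ then yields the claim with $C_2 = 3\max|\grad \ell|$ after the $t$-integration.

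For the second claim, I would substitute $v = e^{\tau \ell} w$, $\p_t v = e^{\tau \ell} \p_t w$ and $(\grad v, \grad \ell) = e^{\tau \ell}(\tau w |\grad \ell|^2 + (\grad w, \grad \ell))$ into $\vartheta \p_t v$, so that $e^{-2\tau\ell}\vartheta \p_t v$ splits into three cross terms, $\tau(\Delta_\mu \ell - \rho)\, w \p_t w$, $2\tau^2 |\grad \ell|^2\, w \p_t w$ and $2\tau \p_t w (\grad w, \grad \ell)$. Applying $2|ab| \le a^2 + b^2$ to each in turn (with $a = |\grad w|$, $b = |\p_t w|$ in the last, after pulling out $|\grad \ell|$) and extracting the suprema of $|\grad \ell|$ and $|\Delta_\mu \ell - \rho|$ over $M$ produces an upper bound built from $w^2$, $|\grad w|^2$ and $(\p_t w)^2$ with explicit powers of $\tau$. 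The stray $w^2$ contributions are then absorbed into $|\grad w|^2$ by Friedrichs' inequality (\ref{ineq_Friedrichs}), which applies since $w(t, \cdot) \in H_0^1(M)$ for every $t$. Collecting the powers of $\tau$ and the suprema assembles exactly the coefficient $\tau(C_3 \tau + C_4)$ with $C_3$, $C_4$ as named, while the global factor $e^{2\tau \ell} \le e^{B_\ell \tau}$ produces the stated exponential.

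The computation is largely mechanical; the only delicate point is keeping careful track of the $\tau$-weights so that the bilinear terms split in a way that matches the target constants, and making sure every zeroth-order $w^2$ residue is routed through (\ref{ineq_Friedrichs}) rather than being carried through as an independent term, since otherwise the stated $C_3$, $C_4$ (which bundle the Friedrichs constant $C_F$ precisely with $|\grad \ell|^2$ and $|\Delta_\mu \ell - \rho|$) would not suffice.
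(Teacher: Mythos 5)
Your proposal is correct and takes essentially the same route as the paper's proof: the weighted divergence theorem together with the boundary facts $v = \p_t v = 0$ and $\grad v = e^{\tau\ell}(\p_\nu w)\nu$ collapses the first claim to the integrand $3 e^{2\tau\ell}\tau (\p_\nu w)^2 (\grad \ell, \nu)$, whose non-positive part off $\Gamma$ is discarded, and the second claim follows from the same pointwise AM--GM splitting of $e^{-2\tau\ell}\vartheta \p_t v$ with the $w^2$ residues absorbed through Friedrichs' inequality (\ref{ineq_Friedrichs}). Your three-way splitting of the cross terms (versus the paper's grouping of $(\Delta_\mu \ell - \rho) + 2\tau|\grad \ell|^2$ into a single coefficient) is an immaterial bookkeeping difference and reassembles to the identical constants $C_2$, $C_3$, $C_4$.
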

\begin{proof}
Notice that on $[0, T] \times \p M$ we have $v = \p_t v = 0$ and 
\begin{align*}
\grad v = e^{\tau \ell} \grad w + \tau v \grad \ell = e^{\tau \ell} \grad w = e^{\tau \ell} (\grad w, \nu) \nu, 
\end{align*}
since $w$ vanishes there. Thus 
\begin{align*}
\vartheta (\grad v, \nu) + (Y, \nu)
&=  2 \tau (e^{\tau \ell}(\grad w, \nu) \nu, \grad \ell)(e^{\tau \ell}\grad w, \nu)
\\&\quad+ \tau |e^{\tau \ell} (\grad w, \nu) \nu|^2(\grad \ell, \nu)
\\&= 
3 e^{2\tau \ell} \tau (\grad w, \nu)^2 (\grad \ell, \nu).
\end{align*}
By the divergence theorem,
\begin{align*}
\int_M \div(\mu \vartheta \grad v + \mu Y) dV
= \int_{\p M} \mu (\vartheta \grad v + Y, \nu) dS,
\end{align*}
and the first claim follows.

For the second claimed inequality, notice that 
\begin{align*}
&\tau^{-1} e^{-2\tau \ell} |\vartheta \p_t v| = |((\Delta_\mu \ell - \rho) w + 2 (\grad w + \tau w \grad \ell, \grad \ell)) \p_t w|
\\&\quad\le
|(\Delta_\mu \ell - \rho) + 2 \tau |\grad \ell|^2| |w \p_t w| 
+ 2|\grad \ell| |\grad w| |\p_t w|
\\&\quad\le
|(\Delta_\mu \ell - \rho)/2 + \tau |\grad \ell|^2|(w^2 + (\p_t w)^2)
+ |\grad \ell|(|\grad w|^2 + (\p_t w)^2).
\end{align*}
Hence
\begin{align*}
&e^{-B_\ell \tau} \tau^{-1} \int_M |\vartheta \p_t v| dm 
\\&\quad\le 
(\max |\Delta_\mu \ell - \rho|/2 + \tau \max |\grad \ell|^2) \ll(\int_M w^2 dm + \int_M (\p_t w)^2 dm \rr)
\\&\quad\quad
+ \max |\grad \ell| \int_M |\grad w|^2 + (\p_t w)^2 dm,
\end{align*}
and the second claimed inequality follows from (\ref{ineq_Friedrichs}) with $\phi = w$.
\end{proof}

\begin{remark}
\label{rem_energy}
Let $u \in C^2([0, \infty) \times M)$ be a solution of 
\begin{align}
\label{eq_wave_control}
&\p_t^2 u(t,x) - \Delta_\mu u(t, x) = 0, & (t,x) \in (0,\infty) \times M,
\\\nonumber& 
u(t,x) = 0, & (t,x) \in (0,\infty) \times \p M,
\end{align}
Then the energy,
\begin{align*}
E(t) := \int_M (\p_t u(t))^2 + |\grad u(t)|^2 dm,
\end{align*}
is constant for $t \in [0, \infty)$. 
\end{remark}

We recall that the constant $C_1$ is defined in Corollary \ref{cor_carleman_ineq}
and the constants $C_2$, $C_3$, $C_4$ and $B_\ell$ are defined in Lemma \ref{lem_boundary_terms}.
Moreover, we define the constant $\beta_\ell = 2 \min_{x \in M} \ell(x)$.

\begin{theorem}[Observability inequality]
\label{thm_obs_ineq}
Suppose that there is a strictly convex function $\ell \in C^3(M)$ with no critical points.
Let $\rho, r > 0$ satisfy 
\begin{align*}
D^2 \ell (X, X) \ge \rho |X|^2, \quad |\grad \ell(x)| \ge r, 
\end{align*}
for all $X \in T_x M$ and $x \in M$,
and let $\Gamma \subset \p M$ contain the set (\ref{def_Gamma}). 
Suppose that 
\begin{align}
\label{def_T}
T > 2 (C_3 \tau + C_4) e^{(B_\ell - \beta_\ell)\tau} \tau,
\quad \text{where }
\tau = \max \ll(\frac{3}{\rho}, \frac{C_1}{2 \rho r^2} \rr).
\end{align}
Let $u \in C^2([0, T] \times M)$ be a solution of 
(\ref{eq_wave_control}).
Then
\begin{align*}
E(0) \le C(T) \int_0^T \int_{\Gamma} (\p_\nu u)^2 \mu dS dt,
\end{align*}
where 
\begin{align*}
C(T) = \frac{C_2 e^{(B_\ell - \beta_\ell)\tau} \tau}{T - 2 (C_3 \tau + C_4) e^{(B_\ell - \beta_\ell)\tau} \tau}.
\end{align*}
\end{theorem}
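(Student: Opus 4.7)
The strategy is to substitute the wave solution $u$ into the pointwise Carleman inequality of Corollary \ref{cor_carleman_ineq}, integrate in space-time, and identify the resulting boundary contributions using Lemma \ref{lem_boundary_terms} together with energy conservation. Since $\partial_t^2 u - \Delta_\mu u = 0$, the first term on the left of the pointwise estimate disappears, and the problem reduces to showing that the surviving left-hand side is dominated by the boundary data on $\Gamma$.

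I would first select $\tau = \max(3/\rho,\ C_1/(2\rho r^2))$. This choice simultaneously forces $(\rho\tau - 1)/2 \ge 1$ and, using $|\grad \ell| \ge r$, also $2\rho|\grad \ell|^2 \tau - C_1 \ge 0$, so after discarding the non-negative zeroth-order contribution we get the pointwise bound
\begin{align*}
-\partial_t(\vartheta \partial_t v) + \div_\mu(\vartheta \grad v) + \div_\mu Y \ge e^{2\tau\ell}\bigl((\partial_t u)^2 + |\grad u|^2\bigr) \ge e^{\beta_\ell \tau}\bigl((\partial_t u)^2 + |\grad u|^2\bigr).
\end{align*}
Integrating over $[0,T] \times M$ against $dm\,dt$ and invoking Remark \ref{rem_energy}, the right-hand side becomes $e^{\beta_\ell \tau} T E(0)$.

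Next I would handle the left-hand side term by term. The time-derivative contribution integrates to $\int_M \vartheta \partial_t v \,dm$ evaluated at $t=0$ and $t=T$; each endpoint is bounded, by the second inequality in Lemma \ref{lem_boundary_terms} applied with $w=u$, by $(C_3\tau + C_4)e^{B_\ell \tau}\tau\, E(t)$, which is exactly $(C_3\tau + C_4)e^{B_\ell \tau}\tau\, E(0)$ by energy conservation. The divergence terms are treated by the first inequality in Lemma \ref{lem_boundary_terms}: since $u$ vanishes on $\partial M$, the divergence theorem produces a boundary integrand proportional to $e^{2\tau\ell}(\partial_\nu u)^2 (\grad \ell,\nu)$, which by the sign restriction in the definition (\ref{def_Gamma}) is supported in the set included in $\Gamma$, giving the upper bound $C_2 e^{B_\ell \tau}\tau \int_0^T \int_\Gamma (\partial_\nu u)^2 \mu\, dS\, dt$.

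Collecting these estimates and dividing through by $e^{\beta_\ell \tau}$ yields
\begin{align*}
T\,E(0) \le 2(C_3\tau + C_4)\, e^{(B_\ell - \beta_\ell)\tau}\tau\, E(0) + C_2\, e^{(B_\ell - \beta_\ell)\tau}\tau \int_0^T \int_\Gamma (\partial_\nu u)^2 \mu\, dS\, dt.
\end{align*}
Under hypothesis (\ref{def_T}) the coefficient $T - 2(C_3\tau+C_4)e^{(B_\ell-\beta_\ell)\tau}\tau$ is strictly positive, and dividing gives the stated inequality with the claimed $C(T)$. The main obstacle is essentially bookkeeping: one must verify that all discarded terms are truly non-negative and that the time-endpoint estimate is applied as an $L^\infty$-in-$t$ bound rather than a time-integrated one — the genuinely hard analytic work (the Carleman identity, the multiplier vector field $Y$, and the reduction to $\Gamma$) is already carried out in Lemma \ref{lem_carleman_eq}, Corollary \ref{cor_carleman_ineq} and Lemma \ref{lem_boundary_terms}.
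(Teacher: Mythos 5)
Your proposal is correct and follows essentially the same route as the paper: integrate the pointwise Carleman inequality of Corollary \ref{cor_carleman_ineq} (where the choice of $\tau$ makes the coefficients $(\rho\tau-1)/2$ and $2\rho|\grad\ell|^2\tau - C_1$ favorable), control the time-endpoint and boundary divergence terms via Lemma \ref{lem_boundary_terms} together with the energy conservation of Remark \ref{rem_energy}, and use $\int_0^T\int_M (\p_t u)^2 + |\grad u|^2\, dm\, dt = T E(0)$ plus the hypothesis (\ref{def_T}) to absorb the endpoint contributions and divide. Your bookkeeping, including the factor $2$ from the two time endpoints and the $e^{\beta_\ell\tau}$ lower bound on $e^{2\tau\ell}$, matches the paper's argument exactly.
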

\begin{proof}
We will integrate the inequality of Corollary \ref{cor_carleman_ineq}.
Notice that 
\begin{align*}
\rho \tau - 1 \ge 2 \quad \text{and} \quad 2 \rho |\grad \ell|^2 \tau - C_1 \ge 0.
\end{align*}
By Lemma \ref{lem_boundary_terms} and Remark \ref{rem_energy} 
\begin{align*}
&e^{\beta_\ell \tau}
\int_0^T \int_M (\p_t u)^2 + |\nabla u|^2 dm dt
\\&\quad\le 
2 (C_3 \tau + C_4) e^{B_\ell \tau} \tau E(0)
+ C_2 e^{B_\ell \tau} \tau \int_0^T \int_{\Gamma} (\p_\nu u)^2 \mu dS dt.
\end{align*}
To conclude notice that 
\begin{align*}
\int_0^T \int_M (\p_t u)^2 + |\nabla u|^2 dm dt = T E(0).
\end{align*}
\end{proof}

\begin{corollary}[Stable observability]
\label{cor_stable_observability}
Suppose that there is a strictly convex function $\ell \in C^3(M)$ with no critical points.
Let $\rho, r > 0$ satisfy 
\begin{align*}
D^2 \ell (X, X) > \rho |X|^2, \quad |\grad \ell(x)| > r, 
\end{align*}
for all $X \in T_x M$ and $x \in M$.
Suppose that open $\Gamma \subset \p M$ satisfies
\begin{align*}
\{ x \in \p M;\ (\grad \ell, \nu) \ge 0 \} \subset \Gamma.
\end{align*}
Let $U_0$ be a bounded $C^2$ neighborhood of $(g, \mu)$.
Then there is a $C^1$ neighborhood $U$ of $g$
and constants $C, T > 0$ satisfying the following:
for all $(\tilde g, \tilde \mu) \in U_0$ such that $\tilde g \in U$,
the solutions 
\begin{align}
\label{energy_class}
\tilde u \in C([0,T]; H^1(M)) \cap C^1([0, T]; L^2(M))
\end{align}
of the wave equation,
\begin{align*}
&\p_t^2 \tilde u - \Delta_{\tilde g, \tilde \mu} \tilde u = 0,& \text{on $(0,T) \times M$},
\\\nonumber& 
\tilde u = 0, & \text{on $(0,T) \times \p M$},
\end{align*}
satisfy the observability inequality
\begin{align*}
\norm{\tilde u(0)}_{H^1_0(M)}^2 + \norm{\p_t \tilde u(0)}_{L^2(M)}^2
\le C \norm{\p_\nu \tilde u}_{L^2((0, T) \times \Gamma)}^2.
\end{align*}
\end{corollary}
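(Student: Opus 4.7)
The plan is to invoke Theorem \ref{thm_obs_ineq} for each admissible pair $(\tilde g, \tilde \mu)$, and then to argue that, thanks to the strict inequalities in the hypotheses and the boundedness of $U_0$ in $C^2$, every constant entering the observability estimate can be chosen uniformly.

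First, I would pick $\rho' \in (0, \rho)$ and $r' \in (0, r)$ strictly below the given constants, so as to leave a margin. In local coordinates the Hessian is $(D^2_{\tilde g}\ell)_{ij} = \p_i \p_j \ell - \Gamma^k_{ij}(\tilde g) \p_k \ell$, where the Christoffel symbols depend on $\tilde g^{-1}$ and one derivative of $\tilde g$; similarly $|\grad_{\tilde g}\ell|^2_{\tilde g} = \tilde g^{ij} \p_i \ell \p_j \ell$ and the boundary inner product $(\grad_{\tilde g}\ell, \nu_{\tilde g})_{\tilde g}$ depend on $\tilde g$ through at most one derivative. Since $\ell$ is fixed in $C^3$, all three quantities depend continuously on $\tilde g$ in the $C^1$ topology, uniformly over $M$. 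Hence one can choose a $C^1$ neighborhood $U$ of $g$ such that, for every $\tilde g \in U$, the bounds $D^2_{\tilde g}\ell(X,X) \ge \rho' |X|^2_{\tilde g}$ and $|\grad_{\tilde g}\ell|_{\tilde g} \ge r'$ hold, and such that $\{x \in \p M;\ (\grad_{\tilde g}\ell, \nu_{\tilde g})_{\tilde g} > 0 \} \subset \Gamma$; the latter uses that $\p M \setminus \Gamma$ is compact and that $(\grad_g\ell, \nu_g)_g < 0$ there, with a strictly negative upper bound preserved under small $C^1$ perturbations.

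Next, I would extract uniform bounds on the constants $C_1, C_2, C_3, C_4, B_\ell, \beta_\ell$ appearing in (\ref{def_T}). The weighted Laplacian $\Delta_{\tilde g, \tilde \mu}\ell = \tilde \mu^{-1} \div_{\tilde g}(\tilde \mu \grad_{\tilde g}\ell)$ involves at most two derivatives of $(\tilde g, \tilde \mu)$ and two derivatives of the fixed $\ell$, so its $C^1$ norm is controlled uniformly on $U_0$; shrinking $U$ further if necessary keeps $\tilde \mu$ uniformly positive and thus controls the Friedrichs constant for $dm = \tilde \mu\, dV_{\tilde g}$ as well. Fixing $\tau$ and $T$ via (\ref{def_T}) with these uniform constants, Theorem \ref{thm_obs_ineq} yields, for every admissible $(\tilde g, \tilde \mu)$ and every $C^2$ solution $\tilde u$ with zero Dirichlet data,
\[
\int_M (\p_t \tilde u(0))^2 + |\grad_{\tilde g} \tilde u(0)|_{\tilde g}^2 \, dm \le C_0 \int_0^T \int_\Gamma (\p_\nu \tilde u)^2 \tilde \mu \, dS_{\tilde g} dt,
\]
with $C_0$ independent of $(\tilde g, \tilde \mu)$. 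The $C^0$ equivalence of $(\tilde g, \tilde \mu)$ and $(g, \mu)$ on $U_0 \cap (U \times \{\cdot\})$ then turns this into the claimed inequality in fixed ambient norms.

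The last step is to pass from $C^2$ solutions to the energy class (\ref{energy_class}). Smooth initial data is dense in $H^1_0(M) \times L^2(M)$, the left-hand side is continuous in this topology, and the right-hand side is controlled via the Lasiecka--Lions--Triggiani hidden regularity of $\p_\nu \tilde u$ in $L^2((0,T) \times \p M)$, whose constant depends continuously on the coefficients; the inequality therefore extends by taking limits. The main obstacle I expect is the bookkeeping in the second paragraph: verifying that $C^1$ closeness of $\tilde g$ (together with $C^2$ boundedness of $U_0$) is sufficient to preserve both the strict convexity of $\ell$ with respect to $\tilde g$ and the explicit Carleman constants, and arranging this so that the neighborhood $U$ can be chosen independently of the accompanying $\tilde \mu$.
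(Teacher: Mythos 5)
Your proposal is correct and follows essentially the same route as the paper: exploit the strict inequalities and the compactness coming from the $C^2$-boundedness of $U_0$ (the paper works on a finite coordinate cover and uses continuity of the smallest eigenvalue of $\p^2_{ij}\ell - \tilde\Gamma^k_{ij}\p_k\ell - \rho\tilde g_{ij}$, of $|\grad_{\tilde g}\ell|_{\tilde g}$, and of $(\grad_{\tilde g}\ell,\nu_{\tilde g})_{\tilde g}$ on compact sets) to choose a uniform $C^1$ neighborhood $U$, bound the Carleman constants $C_1,\dots,C_4$, $B_\ell$, $\beta_\ell$ and the Friedrichs constant uniformly, apply Theorem \ref{thm_obs_ineq} with fixed $\tau$, $T$, and extend to the energy class (\ref{energy_class}) by density together with the hidden-regularity continuity of $(\tilde u(0),\p_t\tilde u(0)) \mapsto \p_\nu\tilde u$ from \cite{Lasiecka1986}. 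The only cosmetic deviation is your margin $\rho' < \rho$, $r' < r$ (the paper keeps $\rho$, $r$, which the strict hypotheses permit), and note that the uniform positivity of $\tilde\mu$ should come from the $C^0$-compactness of $\bar U_0$ rather than from shrinking $U$, since $U$ constrains only $\tilde g$.
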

\def\ss{\lambda_0}
\def\SS{\lambda_n}
\begin{proof}
Let us choose a finite number of compact coordinate neighborhoods covering $M$
and let $K$ be one of them. 
A metric $\tilde g$ is given in $K$ by a smooth matrix valued function
$\tilde g_{ij}$.
Let us denote by $\sigma(\tilde g, x)$ the smallest eigenvalue of 
the matrix $\p_{ij}^2 \ell - \tilde \Gamma_{ij}^k \p_k \ell - \rho \tilde g_{ij}$,
where $\tilde \Gamma_{ij}^k$ are  the Christoffel symbols corresponding to $\tilde g_{ij}$.
Then $\sigma : C^1 \times K \to \R$ is continuous on the compact set 
$K_0 \times K$,
where $K_0$ is the $C^1$ closure of the projection of $U_0$ on the metric tensors.
%
%
In particular, there is a $C^1$ neighborhood $U$ of $g$ such that 
$\sigma(\tilde g, x) > 0$ in $\bar U \cap K_0 \times K$.
%
That is, 
\begin{align*}
D_{\tilde g}^2 \ell (X, X) > \rho |X|_{\tilde g}^2,
\quad X \in TK,\ \tilde g \in U \cap K_0.
\end{align*}
The function $(\tilde g, x) \mapsto |\grad_{\tilde g} \ell(x)|_{\tilde g}$
is continuous on the compact set $K_0 \times K$,
whence by making $U$ smaller if necessary, we have 
\begin{align*}
|\nabla_{\tilde g} \ell(x)|_{\tilde g} > r,
\quad x \in K,\ \tilde g \in U \cap K_0.
\end{align*}
Let us suppose for a moment that $K$ intersects the set $\p M \setminus \Gamma$.
We may assume that $\p M \cap K$ is given by a defining function $F$ and that
$\tilde \nu = \grad_{\tilde g} F/ |\grad_{\tilde g} F|$.
The function 
$(\tilde g, x) \mapsto (\grad_{\tilde g} F(x), \grad_{\tilde g} \ell(x))_{\tilde g}$
is continuous on the compact set $K_0 \times K$,
whence by making $U$ smaller if necessary, we have that
\begin{align*}
(\tilde \nu(x), \grad_{\tilde g} \ell(x))_{\tilde g} < 0,
\quad x \in (K \cap \p M) \setminus \Gamma,\ \tilde g \in U \cap K_0.
\end{align*}
By taking the intersection with respect to the finite cover 
chosen in the beginning of the proof, we see that there is 
a $C^1$ neighborhood $U$ of $g$
such that all $\tilde g \in U \cap K_0$ satisfy 
the assumptions of Theorem \ref{thm_obs_ineq}
with the fixed $\ell$, $\rho$, $r$ and $\Gamma$.

Let us show next that the constants $C_j(\tilde g, \tilde \mu)$, $j = 1, 2, 3, 4$,
stay bounded in $U_0$.
We may first work locally in a compact coordinate neighborhood $K$ as above.
Let us denote by $\ss(\tilde g, x)$ and $\SS(\tilde g, x)$ 
the smallest and the largest eigenvalue of $\tilde g_{ij}(x)$.
The functions $\ss$ and $\SS$ are continuous 
$C^0 \times K \to \R$,
$\tilde g_{ij}(x)$ is positive definite and $\bar U_0 \times K$
is compact in $C^0 \times K$, where the closure is in $C^0$.
In particular, we may choose $C > 0$ so that on $\bar U_0$
\begin{align*}
C^{-1}|X|_{g} \le |X|_{\tilde g} \le C |X|_{g},
\quad X \in TK.
\end{align*}

Moreover, we may choose $C > 0$ so that also
the functions $|\tilde g(x)|$
and $\tilde \mu(x)$
are bounded below by $C^{-1}$ and above by $C$
on $\bar U_0 \times K$.
Hence there is $C_F > 0$ such that 
the Friedrichs' inequality 
\begin{align*}
\int_M \phi^2 d\tilde m \le C_F \int_M |\grad \phi|_{\tilde g}^2 d\tilde m, \quad \phi \in C_0^\infty(M),
\end{align*}
holds for all $(\tilde g, \tilde \mu) \in \bar U_0$.
Now it straightforward to see that 
$C_j(\tilde g, \tilde \mu)$, $j = 1, 2, 3, 4$, are bounded on $U_0$ as they can be expressed
in coordinates using the derivatives of $\tilde g_{ij}$ and $\tilde \mu$ up to the second order. 

The map $(\tilde u(0), \p_t \tilde u(0)) \mapsto \p_\nu \tilde u|_{(0,T) \times \Gamma}$
is continuous from $H_0^1(M) \times L^2(M)$ to $L^2((0,T) \times \Gamma)$ by \cite{Lasiecka1986}. Thus we may approximate the initial data by smooth compactly supported functions and get the observability also for solutions in the energy class (\ref{energy_class}).
\end{proof}

Theorem \ref{thm_stable_for_semisimples}
follows from Corollary \ref{cor_stable_observability}
by choosing $g = c(x)^{-2} dx^2$
and $\mu(x) = c(x)^{n-2}$. 

%
%

\section*{Appendix: A linear space for Dirichlet-to-Neumann operators}
\def\K{\mathcal K}

Let us consider the operator
\begin{align*}
A : H^1_{cc}((0, T) \times \p M) \to L^2((0, 2T) \times \p M),
\end{align*}
and define the map $\K(A) := R A_T R J \Theta - J A$
and the restrictions
\begin{align*}
A_T f := (Af)|_{(0, T) \times \p M}, 
\quad 
A_R f := (Af)|_{(T, 2T) \times \p M}.
\end{align*}
Notice that $2 J \Theta f(t) = \int_t^T f(s) ds$,
whence $R J \Theta f(0) = 0$ and 
$\K(A)$ is well-defined on $H^1_{cc}((0, T) \times \p M)$.
We define  
\begin{align*}
\norm{A}_* := \norm{\K(A)}_{L^2(\Upsilon) \to L^2(\Upsilon)}
+ \norm{A_T}_{H_{cc}^1(\Upsilon) \to L^2(\Upsilon)},
\end{align*}
where we have denoted $\Upsilon := (0, T) \times \p M$.
Let us next show that $\norm{\cdot}_*$ is a norm on 
\begin{align*}
H^1_{DN} := \{ A : H^1_{cc}(\Upsilon) \to L^2((0, 2T) \times \p M);\ \norm{A}_* < \infty \}.
\end{align*}
Clearly, $\norm{\cdot}_*$ is homogeneous and subadditive.
Suppose that $\norm{A}_* = 0$
and let $f \in H^1_{cc}(\Upsilon)$.
Then $A_T = 0$ and 
\begin{align*}
0 = -2\K(A)f(t) = 2JAf(t) = \int_T^{2T - t} A_R f(s) ds, \quad t \in (0, T).
\end{align*}
By differentiating, we see that $A_R f(2T - t) = 0$
for almost all $t \in (0, T)$,
whence $A_R = 0$ and we have shown that $\norm{\cdot}_*$ is a norm.

Let us now consider the Dirichlet-to-Neumann operator
\begin{align*}
\Lambda_{2T} : H^1_{cc}((0, 2T) \times \p M) \to L^2((0, 2T) \times \p M).
\end{align*}
Let 
$E : H^1_{cc}(0, T) \to H^1_{cc}(0, 2T)$
be an extension operator, that is, $Ef|_{[0, T]} = f$.
Then
\begin{align*}
\Lambda_{2T} \circ E : H^1_{cc}((0, T)\times \p M) \to L^2((0, 2T) \times \p M).
\end{align*}
Moreover, causality of the wave equation (\ref{eq_wave}) yields
$(\Lambda_{2T} \circ E)_T = \Lambda_T$ and
\begin{align*}
(h, \K(\Lambda_{2T} \circ E) f) = (u^h(T), u^f(T))_{L^2(M; c^{-2} dx)}
= (h, K(\Lambda_{2T}) f),
\end{align*}
for all $f, h \in C_0^\infty((0, T) \times \p M)$.
Hence the embedding $\Lambda_{2T} \mapsto \Lambda_{2T} \circ E$
of the Dirichlet-to-Neumann operators to $H^1_{DN}$
does not depend on the choice of the extension operator $E$.

\bigskip
{\em Acknowledgements.}
The research was supported by Finnish Centre of Excellence in Inverse
Problems Research, Academy of Finland project COE 250215,
Academy of Finland project 141075 and European Research Council advanced grant 400803.

\bibliographystyle{abbrv} 
\bibliography{main-shitao}
\end{document}